\newcommand{\Rb}{\mathbb R}
\newcommand{\Nb}{\mathbb N}
\newcommand{\eps}{\varepsilon}
\newcommand{\lb}{\left(}
\newcommand{\rb}{\right)}
\newcommand{\enbrace}[1]{\lb #1 \rb}
\newcommand{\setdef}[2]{\left\{ #1\ \left|\ #2 \right.\right\}}
\newcommand{\seq}[1]{ \{ #1 \} }
\newcommand{\vmod}[1]{\left| #1 \right|}
\newcommand{\Rplus}{\Rb_{\geq 0}}
\newcommand{\normban}[1]{\left\| #1 \right\|}
\numberwithin{equation}{section}
\newcommand{\holder}{H\"{o}lder }
\newcommand{\xbar}{\bar{x}}
\newcommand{\etraj}[1]{#1 }
\newcommand{\etrajfull}[1]{\seq{ f^k(#1)}_{k=0}^\infty }
\newcommand{\fspacel}{ \mathcal{B}_{\mathrm{Lip}} } 
\newcommand{\bspace}{ \mathcal{B} } 
\newcommand{\bspacec}{ \mathcal{B}_c } 
\newcommand{\normb}[1]{\normban{#1}_{\bspace} }
\newcommand{\normBV}[1]{\normban{#1}_{BV} }
\newcommand{\normLone}[1]{\normban{#1}_{\Lone} }
\newcommand{\normopKL}[1]{\normban{#1}_{\mathrm{KL}} }
\newcommand{\Lone}{L^1}
\newcommand{\normbl}[1]{\normban{#1}_{\Lone} }
\newcommand{\mfd}{ M }
\newcommand{\distmod}[3]{ \vmod{ \int #1 d #2 - \int #1 d #3  }}  
\newcommand{\differmod}[3]{ \enbrace{ \int #1 d #2 - \int #1 d #3  }}  
\newcommand{\abr}[1]{ \langle #1 \rangle }
\newcommand{\tentparamset}{[\sqrt{2},2]}
\newcommand{\shadparamset}{\mathcal{P}}
\newcommand{\shadname}{stochastic }
\newcommand{\paramset}{\mathcal{A}}
\newcommand{\Xparam}[2]{X^{(#2)}_{#1} }
\newcommand{\mustparam}[2]{\mu^{(#1)}_{#2} }
\newcommand{\muparam}[1]{\mu^{(#1)} }
\newcommand{\epsset}{ (0,\eps_0) }
\newcommand{\obig}[1]{\mathcal{O}\enbrace{#1} }
\newcommand{\seqinf}[2]{\left\{ {#1}_{#2} \right\}_{#2 = 0}^{\infty} }
\newcommand{\Aset}[3]{A^{#1}_{\eps,#2,#3} }
\newcommand{\Bset}[2]{B^{#1}_{\eps,#2} }
\DeclareMathOperator{\dist}{dist}
\DeclareMathOperator{\diam}{diam}
\DeclareMathOperator{\Int}{Int}
\DeclareMathOperator{\var}{var}
\DeclareMathOperator{\Leb}{Leb}
\theoremstyle{plain}
 \newtheorem{theorem}{Theorem}[section]
 \newtheorem{maintheorem}{Theorem}
 \newtheorem{statement}{Statement}[section]
 \newtheorem{lm}[statement]{Lemma}
 \newtheorem{sublm}[statement]{Sublemma}
 \newtheorem{corollary}[statement]{Corollary}
 \newtheorem{maincor}{Corollary}
\theoremstyle{definition} \newtheorem{deff}{Definition}
 \newtheorem{assume}{Assumption}
\theoremstyle{remark} \newtheorem{rem}[statement]{Remark}
    \title{ Stochastic shadowing and stochastic stability}
    \author{Dmitry Todorov}
    \address{ DMA, \'{E}cole Normale Sup\'{e}rieure, 75005 Paris, France \\
    and
    Chebyshev laboratory, Saint Petersburg State University \\ 14th line of Vasiljevsky Island, 29B \\ 199178, Saint-Petersburg, Russia}
    \keywords{shadowing, stochastic stability, 
    ergodicity, hyperbolicity}
\begin{document}

\maketitle

\begin{abstract}
    The notion of stochastic shadowing property is introduced.
    Relations to stochastic stability and standard
    shadowing are studied.
    Using tent map as an example it is proved that,
    in contrast to what happens for standard shadowing,
    there are significantly non-uniformly hyperbolic
    systems that satisfy stochastic shadowing property. 
\end{abstract}

\section{Introduction}

In this paper I address two problems.
The first one is 
getting information about a chaotic dynamical system without 
having access to its exact trajectories
-- i.e. when one has only
trajectories with errors (pseudotrajectories).
The second problem is the lack of examples
of non-uniformly hyperbolic systems having good shadowing
properties.

The most standard naive way to approximate an (SRB)
invariant measure of a chaotic system is to take a point 
of a phase space at random, iterate it (numerically)
for a long time and then build a histogram.
However an issue appears here -- when one does 
numerics, one always gets trajectories with errors
that appear at each iteration.
It is not clear is there a real trajectory close
to each erroneous one and if there is one, does it
reflect well the statistics of the invariant measure one
was that trying to approximate initially.
The same applies of course to experimental observations
of chaotic physical systems.

There are two well-known important notions in this context:
stochastic stability and shadowing.
Both notions state a sort of stability of the system with
respect to small per-iteration perturbations.
So far no direct relations between them were established 
(it was stated as a problem
in \cite{BONATTI_DIAZ_VIANA__BEYOND}).

Both notions have certain drawbacks when one tries to
apply them to a practical situation.
Namely classical shadowing notions (see \cite{PILSDS,PALSDS}) do not
concern statistical information questions at all, and
stochastic stability works only on a level of measures
and does not take into account the fact that ergodic averages
for the unperturbed system can have fluctuations with different 
properties than ergodic averages for the perturbed one.
That is that usually it is desirable that
one has closeness of statistical properties (of samples given by
erroneous and by exact trajectories)
not only in the limit but also for a (big) finite number
of iterations for a large set of erroneous trajectories.
%


It is well known that smooth uniformly hyperbolic systems have shadowing \cite{PILSDS}.
In fact, they 
have even stronger property of quantitative (Lipschitz)
shadowing. Moreover this
property is uniform with respect to
perturbations in $C^1$ topology.
It was proved that in fact uniformly hyperbolic systems are
the only ones to have Lipschitz and \holder shadowing
(see \cite{PILTIKLSISS, TIKH_HOL_SHAD}).

However even for simplest systems with singularities nothing like this is known. 
For piecewise expanding maps of the interval
there is a simple observation by Blank
\cite{BLANK_EPS_TRAJ},
Lipschitz shadowing away
from singularities by Kifer \cite{KIFER88}
and a shadowing result for a
large set of parameters of tent maps by Coven, Kan and Yorke
\cite{YORKE_TENT_SHAD}.

A term ``ergodic shadowing'' has been introduced by
Fakhari and Ghane in \cite{FAKHARI_ERG_SHAD}. The word ``ergodc'' there
was used not because of relation to ergodic properties
but because of special notion of closeness between
sequences of points.
I use term ``stochastic shadowing''
for a different notion which is more ``ergodic'' 
in some sense.
It is somewhat closer to a notion of average
shadowing introduced by Blank in \cite{BLANK_EPS_TRAJ}.

Average shadowing itself will be too strong for our purposes --
it is difficult to satisfy it for a systems
that do not have very strong hyperbolicity
 (with the ambient
manifold being a hyperbolic set).

We will study relations between stochastic shadowing,
stochastic stability and standard shadowing.
To show usefulness of the new notion we examine
family of tent maps as an example.
We will show that there is no hope to have uniform Lipschitz
shadowing for it but nevertheless
it is possible to show the presence of 
Lipschitz stochastic shadowing.




\section{Definitions}

Let $M$ be a compact manifold (possibly with boundary)
with a Riemmanian metric $\dist$.
Let $f$ be a mapping of $M$ to itself.

For $x\in M$ let $\delta_x$ be a $\delta$-measure
concentrated at point $x$.
For a sequence of points $\xi = \seq{x_k}_{k\in J} \subset M$,
where $J$ is either $\Nb\cup\{0\}$ or $\seq{0,1,\ldots,N}$  
 with number of elements greater than $n$
denote 
\begin{equation*}
    S_n(\xi) = \frac{1}{ n+1 } \sum_{k=0}^{ n} \delta_{x_k} .
\end{equation*}
For $x\in M$ denote $S_n(x) = S_n(\etrajfull{x}) $.

Let $\mu$ be a Borel invariant probability measure on $M$.
Invariant means that $\mu(f^{-1}(A)) = \mu(A)$
for every Borel set $A$.

\begin{deff}
    The mesure $\mu$ is \emph{ergodic} if for 
    $\mu$-a.e. point $x\in M$
    \begin{equation}
    S_n(x) \to \mu,\quad n\to \infty    \label{eq:ergconv}
    \end{equation}
    where the convergence holds  in the weak* topology on 
    the space of probability measures on $M$.

    A point $x\in M$ for which \eqref{eq:ergconv} is
    satisfied is called \emph{typical} for $\mu$ 
    or just \emph{$\mu$-typical}.
\end{deff}

\begin{deff}
    The measure $\mu$ is called \emph{physical} if 
    the set of $\mu$-typical points has full (normalized)
    Lebesgue measure
    on $M$.
\end{deff}

\begin{assume}
    Assume $\mu$ is physical for $f$.
\end{assume}

\begin{deff}
    A sequence of points $\xbar = \seq{x_k}_{k=0}^{\infty}$
    is called an $\eps$-pseudotrajectory (of $f$) if
    \begin{equation*}
        \dist(f(x_k),x_{k+1}) \leq \eps,\quad
        k\geq 0.
    \end{equation*}
\end{deff}

Let be $\seq{X_\eps}_{\eps\in \epsset}$ a family
of Markov chains on $M$, where $0<\eps_0 < \diam M /2$.
\begin{deff}
    We call such a family \emph{admissible} if every
    realization of $X_\eps$ is an $\eps$-pseudotrajectory.
\end{deff}

\begin{assume}
    Assume $\seq{X_\eps}_{\eps\in \epsset}$ is admissible.
\end{assume}

For $x\in M$ and a Borel set $E$
let $P_\eps (x,E)$ be a transition probability for the 
chain $X_\eps$ i.e.
\begin{equation*}
    P_\eps(x,E) = P(X_{\eps}^{n+1} \in E | X_{\eps}^n = x),
    \quad n\geq 0.
\end{equation*}

Denote the nomalized Lebesgue measure on $M$ by $\Leb$.
We will sometimes use specific type of perturbations:
\begin{deff}
    We say that the family
    $\seq{X_\eps}_{\eps\in \epsset}$
    is a \emph{family of
    uniform perturbations}
    if for every $0<\eps<\eps_0$
    the transition probability $P_\eps(x,dy)$ 
    for $X_\eps$ is 
    given by a uniform distribution in $\eps$-ball 
    around $f(x)$. 

    I.e. $P_\eps (x,dy)$ 
has the following density for every $x\in M$:
\begin{equation*}
    \enbrace{\frac{d}{\Leb} P_\eps(x,\cdot)}(y) = 
    \frac{1}{\Leb(B_\eps(f(x)) ) }
    \mathbb{\chi}_{B_\eps(f(x)) } (y) ,
\end{equation*}                         
where $\xi_A$ is the indicator function for a set $A$.
\end{deff}


\begin{deff}
    A Borel probability measure on $M$ is said to be
    \emph{stationary for the Markov chain ${X_\eps}$ } if
    for every Borel set $E$
    the following identity holds
    \begin{equation*}
        \mu_\eps (E) = \int P_\eps (x,E) d\mu_\eps(x).
    \end{equation*}
\end{deff}

\begin{deff}
    A measure $\mu_\eps$ stationary for the
    Markov chain $X_\eps$
    is said to be \emph{ergodic} if for $\mu_\eps$-a.e.
    realization $\xbar = \seq{x_k}_{0\leq k \leq \infty}$
    of the Markov chain $X_\eps$
    \begin{equation*}
        S_n(\xbar) \to \mu_\eps,\quad  n\to \infty
    \end{equation*}
    where the convergence holds  in the weak* topology on 
    the space of probability measures on $M$.
\end{deff}

\begin{assume}
    Assume for every $0<\eps<\eps_0$ there is a unique
    ergodic stationary measure $\mu_\eps$ for $X_\eps$.
\end{assume}

Denote the space of continuous functions from $M$ to $\Rb$
with $\sup$-norm
by $(C(M),\normban{\cdot}_\infty)$
and the standard $\Lone$ space with standard $\Lone$ norm
for Borel sigma-algebra by $(\Lone(M),\normbl{\cdot})$.
Let $(\bspace,\normb{\cdot})$ be some Banach space of
functions from $M$ to $\Rb$.
\begin{assume}
    Assume that
    $\bspace \subset (\Lone(M),\normbl{\cdot})$
    and 
    $\bspacec = C(M) \cap \bspace \neq \emptyset$.
\end{assume}


%
%
Let $P^n_\eps$ be the $n$-step transition probability
for the Markov chain $X_\eps$.
\begin{deff}
    We say that 
    the stationary measure
    $\mu_\eps$ \emph{has exponential decay of
        correlations 
    for observables from $\bspace$
} 
        if there exists $\tau>0$ such that for every
    $\tau'>\tau$ for every $\phi,\psi\in \bspace$
        there exists a constant $C = C(\tau,\phi,\psi)$
        such that
        \begin{equation*}
            \vmod{
                \int \enbrace{\int\phi(y) d P^n_\eps(x,y) }
                \psi(x) d \mu(x) -
        \int \phi d\mu_\eps \int \psi d\mu_\eps } 
        \leq
    C(\tau')^n,\quad
        n\in\Nb.
        \end{equation*}
\end{deff}

\begin{deff}
    We say that the stationary measure $\mu_\eps$
    \emph{has property A
    for observables from $\bspace$}
    if for any $\delta>0$, $N\in \Nb$ and any
    $\phi\in \bspace$
    there exists a set $\Aset{\phi}{\delta}{N}$
    of realizations of $X_\eps$
    such that for any $\xbar\in \Aset{\phi}{\delta}{N}$
    for every $n>N$
    the following holds:
    \begin{equation*}
        \distmod{\phi}{S_n(\xbar)}{\mu_\eps} < \delta
    \end{equation*}
    and 
    \begin{equation*}
        \mu_\eps(\Aset{\phi}{\delta}{N} ) \to 1,\quad
        N \to \infty.
    \end{equation*}
\end{deff}

\begin{rem} \label{rem:expdecay_impl_propA}
    If $\mu_\eps$ has exponential decay of correlations
    for observables from $\bspace$,
    then it has property A
    for observables from $\bspace$
    since for every $\phi\in \bspace$ we can write the following estimate
    for some constant $C(\phi)>0$ and some function
    $I(\phi,\delta)>0$:
    \begin{equation*}
        \mu_\eps \enbrace{ \setdef{\xbar}{ \distmod{\phi}{S_n(\xbar) }{\mu_\eps} >
    \delta} } <  C(\phi) e^{-I(\phi,\delta) n} .
    \end{equation*}
    It is left to notice that the sum of the left parts for $n=N\ldots \infty$ 
    is less then $C(\phi,\delta) \exp( -I(\phi,\delta)  N )$
    for some $C(\phi,\delta)>0$.
\end{rem}

Consider $\sigma:\Rplus \to \Rplus$ such that
$\sigma(\eps) \to 0$ as $\eps\to 0$.

\begin{deff}
    The map $f$ is said to have \emph{classical shadowing 
        with accuracy $\sigma$
        with respect to $\seq{X_\eps}_{\eps\in\epsset}$ } if  
    for any $0<\eps<\eps_0$
    for every realization of $X_\eps$ there exists a point $p\in M$ such that
    \begin{equation*}
        \dist(x_k, f^k(p) ) < \sigma(\eps), \quad k\geq 0.
    \end{equation*}

    We call $\sigma$ a \emph{classical shadowing accuracy function}.
\end{deff}

\begin{deff}
    The map $f$ is said to have \emph{standard shadowing 
        with accuracy {$\sigma$} }
        if it has classical shadowing 
        with accuracy $\sigma$
        with respect to any admissible family of Markov
        chains.
\end{deff}

\begin{rem}
    If we do not specify accuracy, this definition
    coincides with the definition of
    (one-sided) shadowing property in \cite{PILSDS}.
    If we ask for $\sigma(\eps) = L\eps$ for some $L>0$ then
    the definition coincides with the definition of
    (one-sided) Lipschitz shadowing property
    from \cite{PILSDS}.
\end{rem}

\begin{deff}
    The map $f$ has
    \emph{strong stochastic stability with speed $\sigma$ with 
        respect to $\seq{X_\eps}_{\eps\in\epsset}$ } if 
    both $\mu$ and $\mu_\eps$ have densities (with respect
    to $\Leb$)
    $\rho$ and $\rho_\eps$ respectively and
    for every $0<\eps<\eps_0$ the following estimate holds:
    \begin{gather*}
        \normban {\rho - \rho_\eps}_{\Lone}  \leq \sigma(\eps).
    \end{gather*}

    We call $\sigma$ a \emph{speed of
    strong stochastic stability}.
\end{deff}

\begin{rem}
  Note that here when we do not require an
  accuracy of shadowing to be a function uniquely defined
  by the system and its perturbation. For example
  if a system has classical shadowing with accuracy
  $\gamma(\eps) = L \eps$ for some $L>0$ then
  it has classical shadowing with accuracy
  $\gamma(\eps) = L \eps^\alpha$ for every $0<\alpha<1$.
  
  The same applies for a speed of stochastic stability
  all similar functions (speeds and accuracies)
  we will consider later.
\end{rem}



%
%

\begin{deff}
    The map $f$ has 
    \emph{stochastic stability with speed $\sigma$ 
        {for observables from $\bspace$} 
        with respect to $\seq{X_\eps}_{\eps\in\epsset}$ }
    if for every $\phi \in \bspace$ 
    there exists a constant $C(\phi)$ such that we have  
    \begin{gather*}
        \distmod{\phi}{\mu}{\mu_{\eps} } \leq
        C(\phi) \sigma(\eps).
    \end{gather*}
    
    We call $\sigma$ a \emph{speed of
    stochastic stability}.
\end{deff}

\begin{rem}
    If the system has this property
    it means that in the limit
    a typical pseudotrajectory approximates
    invariant measure of the initial system.
\end{rem}

\begin{rem}
    If $\mu$ and $\mu_\eps$
    have densities $\rho$ and $\rho_\eps$
    respectively with respect to Lebesgue 
    measure on $\mfd$ and
    the system has
    strong stochastic stability
    with speed $\sigma$ than it also has
    stochastic stability with speed $\sigma$
    for observables from $\bspacec$.

    Fix $\phi\in C(M)$ then
    \begin{gather*}
        \distmod{\phi}{\mu}{\mu_{\eps} } = 
        \vmod{\int \phi (\rho- \rho_\eps) d \Leb } \leq\\
        \leq \int \vmod{\phi} \vmod{\rho-\rho_\eps} d \Leb
        \leq \enbrace{ \sup_{x\in\mfd} \vmod{\phi(x)} }
        \sigma(\eps).
    \end{gather*}
\end{rem}

\begin{rem}
    If the Markov chains $\seq{X_\eps}$ are generated by
    random maps (see \cite{BONATTI_DIAZ_VIANA__BEYOND,JOST_RODRIG}
    for details), then the definition resembles the
    definition of inverse shadowing in \cite{PILINSCM}.
    However for this classical inverse shadowing it
    is shown that it can be quantitatively good 
    only for hyperbolic systems (see \cite{PILMELISP}).
\end{rem}



\begin{deff}
    The map $f$ has 
    \emph{\shadname shadowing with accuracy $\sigma$ 
        for observables from $\bspace$ 
        with respect to $\seq{X_\eps}_{\eps\in\epsset}$ }
        for every 
        $0<\eps<\eps_0$ for every $\phi\in \bspace$ 
        for every $N\in\Nb$
    there exists a constant $C(\phi)$
    and a set $\Bset{\phi}{N}$
    of realizations of $X_\eps$
    such that for every $\xbar = \seqinf{x}{k} \in \Bset{\phi}{N}$
       there exists a 
    point $p\in M$ such that the following holds for $n>N$:
    \begin{gather*}
        \distmod{\phi}{ S_n(\etraj{\xbar}) }{\mu_\eps }
        \leq C(\phi) \sigma(\eps), \nonumber \\
        \distmod{\phi}{ S_n(\etraj{p})}{\mu }
        \leq C(\phi) \sigma(\eps), \label{eq:obsshad1}\\
            \distmod{\phi}
            { S_n(\bar{x})}{ S_n( \etraj{p} ) } 
                \leq C(\phi) \sigma(\eps), \nonumber \label{eq:obsshad2}
    \end{gather*}
    and $\mu_\eps (\Bset{\phi}{N} ) \to 1$ as $N \to \infty$.

    We call $\sigma$ a \emph{\shadname shadowing 
    accuracy function}.
\end{deff}


\begin{rem}   \label{rem:class_close_impl_erg}
    If a point $x$ shadows a pseudotrajectory $\xbar=\seq{x_k}$
    in a usual sense, that is
    \begin{equation*}
        \dist(x_{n},f^n(x)) \leq \delta,\quad
        n\in\Nb,
    \end{equation*}
    then for every $C$-Lipschitz $\phi:M\to \Rb$ we have for
    every natural $n$ that
    \begin{equation*}
        \distmod{\phi}{S_n(\xbar)}{ S_n(\etraj{x}) }
    \leq C\delta.
    \end{equation*}
\end{rem}

Here is an explanation why does one need another 
notion similar to stochastic stability.
\begin{rem}
    It is easy to see that  for every $\phi\in C(M)$ and
    $n\in\Nb$ we can write 
    the following representations:
    \begin{gather}
         \differmod{\phi}{S_n(x) }{S_n(\xbar)} = \nonumber \\
         = 
         \differmod{\phi}{\mu}{\mu_\eps} + \nonumber \\
         \differmod{\phi}{S_n(x) }{\mu} +
         \differmod{\phi}{\mu_\eps}{S_n(\xbar)}   \label{eq:differmod_rep}
    \end{gather}
    and
    \begin{gather*}
         \differmod{\phi}{S_n(x)}{\mu } = \\
         = \differmod{\phi}{S_n(x) }{ S_n(\xbar) } +
         \differmod{\phi}{S_n(\xbar)}{\mu_\eps} +
         \differmod{\phi}{\mu_\eps}{\mu}  .
    \end{gather*}

    Then it is easy to deduce from stochastic stability with speed $\sigma$ 
    %
    that \emph{in the limit} $\mu_\eps$-almost every erroneous trajectory
    has the same statistics as $\mu$-almost every exact one:
    \begin{gather*}
        \varlimsup_{n\to\infty} \distmod{\phi}{S_n(x) }{S_n(\xbar)} \leq C(\phi) \sigma(\eps), \\
        \varlimsup_{n\to\infty} \distmod{\phi}{S_n(x) }{\mu} \leq C(\phi) \sigma(\eps)  .
    \end{gather*}
    A similar observation can be found in \cite{BLANK_EPS_TRAJ}.
    However what one really wants to have is that
    for many erroneous
    trajectories one can find an exact trajectory such that
    for \emph{the same number of iterations} the
    ergodic average for the exact one is close to $\mu$ and
    also close to the ergodic average for the erroneous one.

    Moreover since one usually pick erroneous trajectory at
    random it is highly desirable so that one could choose 
    a lower bound for the necessary number of iterations
    uniformly for those many erroneous trajectories. 

    And as there is no kind of ``speed of ergodic theorem'' statement
    in general, terms \eqref{eq:differmod_rep}
    for a fixed $n$ can fluctuate a lot,
    depending on $\xbar$ and $x$.
\end{rem}





We also define one classical notion we will use later.
\begin{deff}
    Let string [a,b] be a finite set of consecutive integers
    $\seq{a, a+1, \ldots , b}$.
\end{deff}

\begin{deff}
    $f$ has specification property if
    for every $\eps>0$ there exists an integer $N(\eps)$
    such that for every choice of points $x_1,x_2\in \mfd$
    and strings $A_1 = [a_1,b_1]$ and $A_2 = [a_2,b_2]$
    with $a_2 - b_1 > N(\eps)$
    and every integer $p>b_2 - a_1 + N(\eps)$
    there exists a periodic point $x\in \mfd$ with period $p$
    such that
    \begin{gather*}
        \dist(f^i(x),f^i(x_1)),\quad i\in A_1;\\
        \dist(f^i(x),f^i(x_2)),\quad i\in A_2.
    \end{gather*}
\end{deff}

\subsection{Uniform stability and shadowing}

Let $\seq{f_{\alpha}}_{\alpha \in \paramset}$,
where $A$ is a compact metric space,
be a family of maps from $M$ to itself. 
\begin{assume}
    Assume each $f_\alpha$ has a
    unique physical measure $\muparam{\alpha}$.
\end{assume}

Let be $\seq{ \Xparam{\eps}{\alpha} }_{\alpha\in \paramset, \eps\in \epsset}$
a family of Markov chains on $M$
such that every realisation of $X_{\alpha,\eps}$ is an
$\eps$-pseudotrajectory of $f_\alpha$.
\begin{assume}
    Assume for every $(\alpha,\eps)$ there is an ergodic
    stationary measure $\mustparam{\eps}{\alpha}$
    for $ \Xparam{\eps}{\alpha}$.
\end{assume}

We define here uniform versions
of the shadowing and stability notions,
requiring one accuracy function to
suit all the maps of the family.

\begin{deff}
    The family $\seq{f_{\alpha}}_{\alpha\in\paramset}$ has
    \emph{uniform} strong stochastic stability with speed $\sigma$ with 
    respect to $\seq{ \Xparam{\eps}{\alpha} }_{\eps\in\epsset, \alpha\in\paramset}$ 
    if for every $\alpha\in \paramset$ 
    the map 
    $f_{\alpha}$ has strong stochastic
    stability with speed $\sigma$ with 
    respect to $\seq{ \Xparam{\eps}{\alpha}
    }_{\eps\in\epsset}$.

    We call $\sigma$ a \emph{speed of uniform strong stochastic stability}.
\end{deff}

\begin{deff}
    The family $\seq{f_{\alpha}}_{\alpha\in\paramset}$ has
    \emph{uniform} stochastic stability
    with speed $\sigma$ 
    for observables from $\bspace$.
    with 
    respect to $\seq{ \Xparam{\eps}{\alpha} }_{\eps\in\epsset, \alpha\in\paramset}$ 
    if for every $\alpha\in \paramset$ 
    the map 
    $f_{\alpha}$ has stochastic stability
    with speed $\sigma$ with 
    for observables from $\bspace$
    respect to
    $\seq{ \Xparam{\eps}{\alpha} }_{\eps\in\epsset}$ 
        with the same  
        $C(\phi)$ for
        every $\phi\in \bspace$. 

    We call $\sigma$ a \emph{speed of uniform
    stochastic stability}.
\end{deff}

%

\begin{deff}
    The family $\seq{f_{\alpha}}_{\alpha\in\paramset}$
    is said to have \emph{uniform classical
        shadowing with accuracy $\sigma$
        with respect to
        $\seq{ \Xparam{\eps}{\alpha} }_{\eps\in\epsset, \alpha\in\paramset}$ } 
        if 
    for every $\alpha\in \paramset$ 
    the map
    $f_{\alpha}$ has classical shadowing with accuracy $\sigma$ with 
    respect to $\seq{ \Xparam{\eps}{\alpha} }_{\eps\in\epsset}$ 

    
    We call $\sigma$ a \emph{uniform classical
    shadowing accuracy function}.
\end{deff}

%

\begin{deff}
    The family $\seq{f_{\alpha}}_{\alpha\in\paramset} $
    has \emph{uniform} \shadname
    shadowing property with accuracy $\sigma$
    for observables from $\bspace$
    with respect to $\seq{ \Xparam{\eps}{\alpha}
        }_{\eps\in\epsset, \alpha\in\paramset}$ 
        if for every $\alpha \in \paramset$ 
        the map
        $f_{\alpha}$ has \shadname shadowing property
        for observables from $\bspace$
        with accuracy $\sigma$ 
        with respect to
        $\seq{ \Xparam{\eps}{\alpha} }_{\eps\in\epsset}$
        %
        with the same  
        $C(\phi)$ for
        every $\phi\in \bspace$. 

    We call $\sigma$ a \emph{uniform \shadname shadowing 
    accuracy function}.
\end{deff}


Here are several definitions we will use in Section 
\ref{sec:ex}.

Put $c=1/2$.
For $s \in \tentparamset $
denote by $f_s$ a tent map with slope $s$:
\begin{gather*}
    f_s(x) = \begin{cases}
        sx & x < c, \\
        s - sx & c < x < 1.
    \end{cases}
\end{gather*}

Fix an $s_0\in \tentparamset$ and
denote $g=f_{s_0}$.

\begin{deff}
    We call a
    continuous map $\hat{g}:[0,1] \to [0,1]$  
    \emph{piecewise
    expanding $C^r$ unimodal} for $r\geq 1$ if
 there exists $a\in (0,1)$ such that 
 $g|_{[0,a]}$ is strictly increasing and extends to a $C^r$ map
 in the neighborhood of $[0,a]$ and
 $g|_{[a,1]}$ is strictly decreasing and extends to a $C^r$ map
 in the neighborhood of $[a,1]$.
\end{deff}

\begin{deff}
Let $r\geq r_0\geq 2$ be integers. 
A \emph{$C^{r_0,r}$-perturbation} of $g$
is a family of piecewise expanding
$C^r$ unimodal maps $g_t:[0,1]\to[0,1]$, $t\in [-1,1]$
with $f_0 = f$ and satisfying the following properties:
there exists neighborhoods $I_{1},I_2$ of
$[0,c]$ and $[c,1]$
respectively so that the $C^r$ norm of the extension of
$g_t|_{I_i},\ i=1,2$ is uniformly bounded for small $\vmod{t}$
and so that
\begin{equation*}
    \normban{ (g-g_t)|_{I_i}}_{C^{r-1}} \leq Ct,\quad
    i=1,2
\end{equation*}
for some $C>0$.
The map $(x,t)\to g_t(x)$ extends to a $C^{r_0}$
function on a neighbourhood of $(I_1 \cup I_2) \times \{0\}$.
\end{deff}

\begin{deff}
Let $r\geq r_0\geq 2$ be integers. 
A $C^{r_0,r}$-perturbation of $g$ is \emph{tangent to
the topological class of $g$} if there exists a
$C^{2,2}$-perturbation $\tilde{g}_t$ of $f$
such that
\begin{equation*}
    \sup_{x\in M}\vmod{g_t(x) - \tilde{g}_t(x)} = \obig{t^2}
\end{equation*}
and homeomorphisms $h_t$ with $h(c)=c$ such that 
$\tilde{g}_t = h_t \circ g_t \circ h^{-1}_t$.
    
\end{deff}


%
%
%
%

\section{Main Results}

For the sake of brevity we will not 
mention the perturbation with respect to which
\shadname shadowing or stochastic stability holds,
always meaning $\seq{X_\eps}_{\eps\in\epsset}$
that we fixed before.

First we state some general theorems.

\begin{maintheorem}\label{mainth:stshad_impl_ststab}
    If the map $f$ has \shadname shadowing 
    with accuracy $\sigma$ then
    for observables from $\bspace$ 
    it has stochastic stability with speed $\sigma$
    for observables from $\bspace$.
\end{maintheorem}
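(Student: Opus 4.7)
The plan is to derive stochastic stability directly from the three inequalities in the definition of stochastic shadowing via the triangle inequality, exploiting the fact that the quantity $\distmod{\phi}{\mu}{\mu_\eps}$ we need to bound does not depend on any trajectory or on $n$.

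First, fix an arbitrary $\phi\in\bspace$ and $0<\eps<\eps_0$. By the stochastic shadowing hypothesis, there exists a constant $C(\phi)$ and, for every $N\in\Nb$, a set $\Bset{\phi}{N}$ of realizations of $X_\eps$ with $\mu_\eps(\Bset{\phi}{N})\to 1$ as $N\to\infty$ and such that for each $\xbar\in \Bset{\phi}{N}$ there is a point $p=p(\xbar)\in M$ for which the three displayed inequalities hold for every $n>N$.

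Next, I would choose $N$ large enough that $\mu_\eps(\Bset{\phi}{N})>0$, which is possible because $\mu_\eps(\Bset{\phi}{N})\to 1$. Since $\mu_\eps$ is a probability measure, $\Bset{\phi}{N}$ is non-empty, so we may pick some realization $\xbar\in \Bset{\phi}{N}$, the associated shadowing point $p$, and any integer $n>N$. Now apply the triangle inequality:
\begin{gather*}
    \distmod{\phi}{\mu}{\mu_\eps} \leq
    \distmod{\phi}{\mu}{S_n(p)} +
    \distmod{\phi}{S_n(p)}{S_n(\xbar)} +
    \distmod{\phi}{S_n(\xbar)}{\mu_\eps} \leq 3C(\phi)\sigma(\eps).
\end{gather*}
The left-hand side is independent of the choices of $\xbar$, $p$, and $n$, so setting $C'(\phi):=3C(\phi)$ yields the desired stochastic stability bound for the observable $\phi$.

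There is essentially no obstacle here; the only mild subtlety is to observe that $\Bset{\phi}{N}$ is non-empty for some sufficiently large $N$, which follows immediately from $\mu_\eps(\Bset{\phi}{N})\to 1$. Everything else is the triangle inequality applied to the three estimates packaged into the definition of stochastic shadowing.
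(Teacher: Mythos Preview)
Your proof is correct and follows essentially the same approach as the paper: fix $\phi$ and $\eps$, choose $N$ so that $\Bset{\phi}{N}$ is non-empty, pick some $\xbar$ in it with its shadowing point $p$ and some $n>N$, and bound $\distmod{\phi}{\mu}{\mu_\eps}$ by $3C(\phi)\sigma(\eps)$ via the triangle inequality applied to the three defining estimates. Your write-up is in fact slightly more careful than the paper's in justifying the non-emptiness of $\Bset{\phi}{N}$.
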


Denote by $\fspacel$ the set of Lipschitz functions from
$M$ to $\Rb$.

\begin{maintheorem} 
        \label{mainth:ststab_impl_stshad}
    If the map $f$ has stochastic stability with speed
    $\gamma$ 
    for observables from $\fspacel$
    and 
    classical shadowing with accuracy $\sigma$ and
    exponential decay of correlations 
    for observables from $\fspacel$
    then
    it has \shadname shadowing with accuracy
    $\max(\sigma,\gamma)$ for observables
    from $\fspacel$.
\end{maintheorem}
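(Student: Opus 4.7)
The plan is to combine the three hypotheses via the triangle inequality: classical shadowing yields a true orbit close to a given pseudotrajectory, from which Remark \ref{rem:class_close_impl_erg} immediately controls $\distmod{\phi}{S_n(\xbar)}{S_n(p)}$; exponential decay of correlations supplies (via Remark \ref{rem:expdecay_impl_propA}) a property A set on which ergodic averages of $\xbar$ approximate $\mu_\eps$ quantitatively; and stochastic stability closes the loop between the two invariant measures $\mu$ and $\mu_\eps$. Fix $\eps\in\epsset$, $\phi\in \fspacel$ with Lipschitz constant $L_\phi$, and $N\in\Nb$.

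First, I would set $\delta := \max(\sigma(\eps),\gamma(\eps))$ and define $\Bset{\phi}{N} := \Aset{\phi}{\delta}{N}$, the property A set furnished by Remark \ref{rem:expdecay_impl_propA}. By that remark, for every $\xbar \in \Bset{\phi}{N}$ and every $n>N$,
\begin{equation*}
    \distmod{\phi}{S_n(\xbar)}{\mu_\eps} < \delta \leq \max(\sigma(\eps),\gamma(\eps)),
\end{equation*}
and $\mu_\eps(\Bset{\phi}{N}) \to 1$ as $N\to\infty$ (with $\delta$ fixed because $\eps$ is fixed). This takes care of the first inequality in the definition of stochastic shadowing.

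Next, for each $\xbar \in \Bset{\phi}{N}$ (which is in particular an $\eps$-pseudotrajectory by admissibility), classical shadowing supplies a point $p\in M$ such that $\dist(x_k,f^k(p))\leq \sigma(\eps)$ for all $k\geq 0$. By Remark \ref{rem:class_close_impl_erg},
\begin{equation*}
    \distmod{\phi}{S_n(\xbar)}{S_n(p)} \leq L_\phi\, \sigma(\eps),
\end{equation*}
giving the third inequality. For the second inequality, I apply the triangle inequality together with the stochastic stability bound $\distmod{\phi}{\mu}{\mu_\eps} \leq C_0(\phi)\gamma(\eps)$:
\begin{equation*}
    \distmod{\phi}{S_n(p)}{\mu} \leq L_\phi\,\sigma(\eps) + \delta + C_0(\phi)\gamma(\eps) \leq C(\phi)\max(\sigma(\eps),\gamma(\eps)),
\end{equation*}
with $C(\phi) := L_\phi + 1 + C_0(\phi)$. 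This choice of $C(\phi)$ simultaneously works for all three inequalities after enlarging it by a bounded factor, which completes the verification.

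There is no serious obstacle — the argument is essentially a bookkeeping of three independent estimates — but the one point that requires attention is the interplay between $\delta$ and $N$ in the property A set. Since the definition of stochastic shadowing allows $\Bset{\phi}{N}$ to depend on $\eps$, one is free to take $\delta$ shrinking with $\eps$; the convergence $\mu_\eps(\Aset{\phi}{\delta}{N})\to 1$ as $N\to\infty$ (for that fixed $\delta$) then suffices, and no uniformity in $\eps$ of this convergence is needed.
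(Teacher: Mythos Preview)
Your proposal is correct and follows essentially the same route as the paper: the paper likewise sets $\Bset{\phi}{N}$ equal to a property~A set (it uses $\Aset{\phi}{\sigma(\eps)}{N}$ rather than your $\Aset{\phi}{\max(\sigma(\eps),\gamma(\eps))}{N}$, a cosmetic difference), invokes classical shadowing together with Remark~\ref{rem:class_close_impl_erg} for the pointwise comparison, and then closes with the same three-term triangle inequality through $\mu_\eps$ and $\mu$. Your explicit remark on why the dependence of $\delta$ on $\eps$ causes no trouble is a useful clarification the paper leaves implicit.
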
 

\begin{maintheorem} \label{mainth:exp_decay_specif_impl_stochshad}
    If $f$ is continuous and has stochastic stability 
    with speed $\sigma$
    for observables from $\bspacec$ 
    and the following
    conditions are satisfied:
    \begin{itemize}
        \item $f$ has specification property,
        \item $\mu$ is not a finite
            sum of $\delta$ measures,
        \item   the stationary measure $\mu_\eps$ has
            exponential decay of correlations
            for observables from $\bspacec$ 
    \end{itemize}
    then 
    it has \shadname shadowing  with accuracy
    $\sigma$
    for observables from $\bspacec$.
\end{maintheorem}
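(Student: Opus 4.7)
The goal is, for each $\eps$, $\phi\in\bspacec$, and $N\in\Nb$, to exhibit a set $\Bset{\phi}{N}$ of realizations of $X_\eps$ with $\mu_\eps(\Bset{\phi}{N})\to 1$ as $N\to\infty$, and, for each $\xbar\in\Bset{\phi}{N}$, a point $p\in M$ satisfying the three ergodic-average inequalities of \shadname shadowing for every $n>N$.

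For the first inequality I would use exponential decay of correlations directly. By Remark~\ref{rem:expdecay_impl_propA}, $\mu_\eps$ has property A. Setting $\Bset{\phi}{N}:=\Aset{\phi}{\sigma(\eps)}{N}$ immediately yields $\distmod{\phi}{S_n(\xbar)}{\mu_\eps}<\sigma(\eps)$ for every $n>N$ and every $\xbar\in\Bset{\phi}{N}$, and also delivers the required $\mu_\eps(\Bset{\phi}{N})\to 1$ for free.

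For the second inequality I need a point $p$ whose Birkhoff averages $S_n(p)$ test within $O(\sigma(\eps))$ of $\int\phi\,d\mu$, uniformly on $n>N$. This is where the specification hypothesis and the non-atomicity of $\mu$ enter. I would follow a Sigmund-style construction: fix a $\mu$-typical point $x_0\in M$ (available because $\mu$ is physical), pick a long segment $x_0,\ldots,f^L(x_0)$ whose empirical distribution tests $\phi$ against $\mu$ to within $\sigma(\eps)/2$, and use the specification property to close this segment up into a periodic orbit $\gamma$ of period $L'$ close to $L$. Placing $p$ on $\gamma$, the average $S_n(p)$ coincides with the empirical measure of $\gamma$ up to a boundary error of order $L'/n$, so choosing $N$ larger than some threshold $N_0(\eps)\sim L'/\sigma(\eps)$ gives $\distmod{\phi}{S_n(p)}{\mu}\leq C(\phi)\sigma(\eps)$ for every $n>N$. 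For the finitely many $N<N_0(\eps)$ I would simply set $\Bset{\phi}{N}=\emptyset$, which does not affect the $N\to\infty$ limit. The non-atomicity of $\mu$ rules out the degenerate case where $\mu$ is itself a periodic-orbit measure, in which the argument would short-circuit trivially. The third inequality then follows from the first two and stochastic stability, $\distmod{\phi}{\mu}{\mu_\eps}\leq C(\phi)\sigma(\eps)$, by a single triangle estimate.

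The main obstacle is the quantitative part of the Sigmund construction: one has to tune the period $L'$, the specification closeness $\eps'$, and the threshold $N_0(\eps)$ simultaneously, so that the boundary error $L'/n$ is dominated by $\sigma(\eps)$ uniformly in $n>N$ and the specification-induced error on $\int\phi$ (for which continuity of $f$ and of $\phi$ is exploited) is also $O(\sigma(\eps))$.
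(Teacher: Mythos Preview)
Your approach is correct and essentially identical to the paper's: both set $\Bset{\phi}{N}=\Aset{\phi}{\sigma(\eps)}{N}$ via Remark~\ref{rem:expdecay_impl_propA}, produce the shadowing point $p$ as a periodic point whose orbit measure approximates $\mu$ (the paper cites Sigmund's density theorem directly rather than rebuilding the closing construction), and finish with the same triangle estimate using stochastic stability. The only structural difference is the order of choices: the paper fixes $N$ first (with $2/N<\sigma(\eps)$) and then invokes non-atomicity of $\mu$ to guarantee an approximating periodic orbit of period exceeding $N$, whereas your ordering---period $L'$ first, then threshold $N_0\sim L'/\sigma(\eps)$---renders the non-atomicity hypothesis essentially idle and makes the boundary error $O(L'/n)$ more transparent.
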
 

As specification property is known to hold for 
certain systems (see \cite{ SIGMUND_SPECIF}) and 
for interval maps topological mixing
implies specification (see \cite{BLOKH})
and strong stochastic stability is proved for
many systems (see \cite{VIANA_ONLINE, BLANK_KELLER})
we can state the following result:

\begin{maincor}
    The following systems 
    have \shadname shadowing 
    for observables from $\bspacec$,
    with accuracy equal to 
    the speed 
    of stochastic stability:
    \begin{itemize}
        \item uniformly hyperbolic attractors,
        \item expanding maps of closed manifolds,
        \item topologically mixing piecewise expanding maps of the interval,
        \item topologically mixing smooth unimodal maps of the interval
            with exponential decay of correlations (see \cite{VIANA_ONLINE}
            for exact conditions to get exponential decay).
    \end{itemize}
\end{maincor}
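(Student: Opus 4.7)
The plan is to invoke Theorem~\ref{mainth:exp_decay_specif_impl_stochshad} separately for each of the four classes, verifying its five hypotheses case by case. Namely, I would check: continuity of $f$, the specification property, that $\mu$ is not a finite sum of $\delta$-measures, stochastic stability with speed equal to the claimed accuracy for observables from $\bspacec$, and exponential decay of correlations of $\mu_\eps$ for observables from $\bspacec$.

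For the hypotheses concerning the unperturbed dynamics I would proceed as follows. In every listed class the physical measure $\mu$ is absolutely continuous with respect to Lebesgue (the SRB measure in the hyperbolic and expanding cases, the invariant density in the interval cases), hence $\mu$ cannot be a finite sum of Dirac masses. Continuity of $f$ is automatic in the smooth cases; for topologically mixing piecewise expanding interval maps one must handle the finitely many break points separately, which is the only subtle point (see below). The specification property is supplied by Sigmund's theorem \cite{SIGMUND_SPECIF} for uniformly hyperbolic attractors and, consequently, for topologically mixing expanding maps on closed manifolds; for topologically mixing interval maps it follows from Blokh's theorem \cite{BLOKH}, as stated in the discussion preceding the corollary.

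For the hypotheses on the perturbed dynamics I would appeal to the references cited just before the statement: each of the four classes admits strong stochastic stability with an explicit speed $\sigma(\eps)$ (see \cite{VIANA_ONLINE, BLANK_KELLER}), and the Lasota--Yorke type analysis underlying those results yields a uniform spectral gap for the transfer operators of the Markov chains $X_\eps$, hence exponential decay of correlations of $\mu_\eps$ on $\bspacec$. By the earlier remark, strong stochastic stability implies ordinary stochastic stability for $\bspacec$ with the same speed $\sigma$, so the last two hypotheses of Theorem~\ref{mainth:exp_decay_specif_impl_stochshad} hold with accuracy $\sigma$. Applying the theorem in each of the four cases finishes the argument.

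The main obstacle, as flagged above, is the continuity hypothesis in the piecewise expanding interval case: Theorem~\ref{mainth:exp_decay_specif_impl_stochshad} presumes $f$ continuous, whereas a piecewise expanding unimodal map has a break point. I would handle this by either (i) checking that in the proof of Theorem~\ref{mainth:exp_decay_specif_impl_stochshad} continuity of $f$ enters only to glue specification-produced orbit segments, and that the Blokh-style specification for interval maps is uniform enough that the finite-time orbit segments used in the construction may be chosen to avoid the break points; or (ii) approximating $f$ by continuous maps coinciding with it off a shrinking neighbourhood of the break points and passing to the limit in the $\bspacec$ estimates. Once this point is dealt with, the remaining work is essentially bookkeeping against the cited literature.
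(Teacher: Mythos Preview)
Your approach is essentially the same as the paper's: the corollary is meant to follow directly from Theorem~\ref{mainth:exp_decay_specif_impl_stochshad} once one notes, as in the sentence immediately preceding the statement, that specification holds (via \cite{SIGMUND_SPECIF} in the hyperbolic/expanding cases and via \cite{BLOKH} in the interval cases) and that strong stochastic stability with exponential decay of correlations for the perturbed chain is supplied by \cite{VIANA_ONLINE, BLANK_KELLER}. No further argument is given in the paper.

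One comment on the ``main obstacle'' you flag. In the context of the paper and of Blokh's theorem, ``piecewise expanding maps of the interval'' means \emph{continuous} maps whose derivative is piecewise $C^r$ and uniformly bounded away from $1$ in modulus; see the paper's own Definition of a piecewise expanding $C^r$ unimodal map, which begins ``We call a \emph{continuous} map\ldots''. The tent map is the prototype. The turning point is a break of the derivative, not of $f$ itself, so the continuity hypothesis of Theorem~\ref{mainth:exp_decay_specif_impl_stochshad} is satisfied outright and neither of your workarounds (i) or (ii) is needed. If one wanted to include genuinely discontinuous piecewise expanding maps, Blokh's theorem would no longer apply and the corollary would not cover them anyway.
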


\begin{rem}
    Classes of perturbations in the previous theorem for which
    stochastic stability is proved are somewhat different.

We also state uniform versions of some theorems above:

\begin{maintheorem}
    Let family 
 $\seq{f_{\alpha}}_{\alpha \in \paramset}$
    have uniform \shadname shadowing 
    with accuracy $\sigma$ 
    for observables from $\bspace$ 
    then
    it has uniform stochastic
    stability with speed $\sigma$
    for observables from $\bspace$.
\end{maintheorem}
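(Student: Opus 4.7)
The plan is to reduce this to Theorem \ref{mainth:stshad_impl_ststab} applied parameter by parameter, while tracking that the multiplicative constant $C(\phi)$ can be chosen independently of $\alpha$. The uniform stochastic shadowing hypothesis already bakes this uniformity in: by definition it supplies the same constant $C(\phi)$ and the same accuracy function $\sigma$ for every $\alpha\in\paramset$. So the real content is just to verify that the proof of Theorem \ref{mainth:stshad_impl_ststab} preserves the constant $C(\phi)$ up to a universal numerical factor.

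Concretely, I would fix $\phi\in\bspace$ and $\alpha\in\paramset$ and $0<\eps<\eps_0$. By uniform stochastic shadowing, there is a constant $C(\phi)$ (independent of $\alpha$) and, for every $N\in\Nb$, a set $\Bset{\phi}{N}$ of realizations of $\Xparam{\eps}{\alpha}$ with $\mustparam{\eps}{\alpha}(\Bset{\phi}{N})\to 1$ as $N\to\infty$. In particular $\Bset{\phi}{N}$ is non-empty for $N$ large, so one may pick $\bar{x}\in\Bset{\phi}{N}$ together with the associated shadowing point $p\in M$ and some $n>N$. The three inequalities in the definition of stochastic shadowing then give
\begin{gather*}
    \distmod{\phi}{S_n(\bar x)}{\mustparam{\eps}{\alpha}} \leq C(\phi)\sigma(\eps),\\
    \distmod{\phi}{S_n(p)}{\muparam{\alpha}} \leq C(\phi)\sigma(\eps),\\
    \distmod{\phi}{S_n(\bar x)}{S_n(p)} \leq C(\phi)\sigma(\eps).
\end{gather*}
A single application of the triangle inequality yields
\begin{equation*}
    \distmod{\phi}{\muparam{\alpha}}{\mustparam{\eps}{\alpha}} \leq 3 C(\phi)\sigma(\eps).
\end{equation*}
The right-hand side does not depend on $n$, $N$, $\bar{x}$, $p$, nor $\alpha$, so this is exactly the uniform stochastic stability estimate with constant $3C(\phi)$ and speed $\sigma$.

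There is no genuine obstacle in this argument, since it is essentially a bookkeeping exercise on top of Theorem \ref{mainth:stshad_impl_ststab}. The only point that deserves a brief verification is that, in the uniform stochastic shadowing definition, the phrase \emph{with the same $C(\phi)$ for every $\phi\in\bspace$} must be read as guaranteeing that $C(\phi)$ is independent of the parameter $\alpha$ as well, for otherwise the resulting constant in the stability estimate could blow up over $\paramset$. Once that convention is in place, the proof is immediate: the factor $3$ is absorbed into the new constant $C(\phi)$ used for uniform stochastic stability, and $\sigma$ is unchanged.
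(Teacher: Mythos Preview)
Your proposal is correct and follows exactly the approach of the paper: the paper proves the non-uniform Theorem~\ref{mainth:stshad_impl_ststab} by the same triangle-inequality splitting yielding $3C(\phi)\sigma(\eps)$, and then states explicitly that ``the proofs of uniform versions of the above theorems are just repetitions of proofs of the non-uniform versions.'' Your only addition is making explicit that $C(\phi)$ is $\alpha$-independent by hypothesis, which is precisely the bookkeeping needed.
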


\begin{maintheorem} 
    Let family 
 $\seq{f_{\alpha}}_{\alpha \in \paramset}$
    have 
    uniform 
    stochastic stability with speed
    $\gamma$
    for observables from $\fspacel$,
    uniform classical shadowing with accuracy $\sigma$ 
    and 
    for each $\alpha\in\paramset$ the map $f_\alpha$ has
    exponential decay of correlations 
    for observables from $\fspacel$
    then
    the family 
 $\seq{f_{\alpha}}_{\alpha \in \paramset}$
    has uniform \shadname shadowing with accuracy
    $\max(\sigma,\gamma)$ for 
    observables from $\fspacel$.
\end{maintheorem}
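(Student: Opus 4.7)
The plan is to reduce the uniform statement to a per-parameter application of Theorem \ref{mainth:ststab_impl_stshad}, and then verify that the constants $C(\phi)$ produced by that theorem depend only on quantities that are already uniform in $\alpha$. Fix any $\alpha\in\paramset$. The hypotheses of Theorem \ref{mainth:ststab_impl_stshad} are satisfied by $f_\alpha$ with respect to $\seq{\Xparam{\eps}{\alpha}}_{\eps\in\epsset}$: stochastic stability with speed $\gamma$ for Lipschitz observables comes from the uniform stochastic stability assumption (with a constant $C_1(\phi)$ that does not depend on $\alpha$); classical shadowing with accuracy $\sigma$ comes from uniform classical shadowing; and exponential decay of correlations for Lipschitz observables holds by the pointwise hypothesis. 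Hence $f_\alpha$ has \shadname shadowing with accuracy $\max(\sigma,\gamma)$ for observables from $\fspacel$, furnishing for every $\phi\in\fspacel$ and every $N\in\Nb$ a set $\Bset{\phi}{N}$ (depending on $\alpha$) and a constant $C_\alpha(\phi)$ witnessing the three inequalities in the definition.

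The next step is to trace where $C_\alpha(\phi)$ arises in the proof of Theorem \ref{mainth:ststab_impl_stshad}, and to show that it can be chosen independent of $\alpha$. Schematically, for each realization $\xbar$ in a suitable set one first invokes classical shadowing to obtain a shadowing point $p$, then by Remark \ref{rem:class_close_impl_erg}
\begin{equation*}
    \distmod{\phi}{S_n(\xbar)}{S_n(p)} \leq \mathrm{Lip}(\phi)\,\sigma(\eps).
\end{equation*}
Exponential decay of correlations, via Remark \ref{rem:expdecay_impl_propA}, gives property A for $\mu_\eps^{(\alpha)}$; choosing the parameter $\delta = C_1(\phi)\max(\sigma,\gamma)(\eps)$ yields a set of realizations of $\mustparam{\alpha}{\eps}$-measure tending to $1$ on which
\begin{equation*}
    \distmod{\phi}{S_n(\xbar)}{\mu_\eps} \leq C_1(\phi)\max(\sigma,\gamma)(\eps).
\end{equation*}
The remaining bound $\distmod{\phi}{S_n(p)}{\mu}$ is obtained by feeding these two inequalities and the uniform stochastic stability bound $\distmod{\phi}{\mu}{\mu_\eps}\leq C_1(\phi)\gamma(\eps)$ into the triangle decomposition from the remark following \eqref{eq:differmod_rep}. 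All three final constants are thus combinations of $\mathrm{Lip}(\phi)$ and $C_1(\phi)$, both of which are independent of $\alpha$; taking the maximum yields a single $C(\phi)$ valid for every $\alpha$.

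The only $\alpha$-dependent data left in the conclusion are the sets $\Bset{\phi}{N}$ themselves and the rate at which $\mustparam{\alpha}{\eps}(\Bset{\phi}{N})\to 1$ as $N\to\infty$; these are governed by the (non-uniform) exponential decay constants. Since the definition of uniform \shadname shadowing only asks for the constants $C(\phi)$ to be common across $\alpha$, and places no uniformity requirement on the $N\to\infty$ convergence of the measures of $\Bset{\phi}{N}$, this suffices.

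The main obstacle is the bookkeeping in the previous paragraph: one must confirm that the proof of Theorem \ref{mainth:ststab_impl_stshad} yields a constant $C(\phi)$ expressible purely in terms of $\mathrm{Lip}(\phi)$ and the stochastic stability constant, and does not implicitly absorb any constant coming from the exponential decay rate or from $f_\alpha$ itself. Provided Theorem \ref{mainth:ststab_impl_stshad} is established along the natural lines sketched above, this verification is routine; otherwise, the argument needs to be rearranged so that the exponential-decay constants are pushed entirely into the choice of $N$ and the sets $\Bset{\phi}{N}$, rather than into $C(\phi)$.
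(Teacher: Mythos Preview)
Your proposal is correct and matches the paper's approach: the paper simply states that ``the proofs of uniform versions of the above theorems are just repetitions of proofs of the non-uniform versions,'' and your argument carries out exactly that repetition, with the additional (and appropriate) bookkeeping to verify that the constant $C(\phi)$ produced is built only from $\mathrm{Lip}(\phi)$ and the uniform stochastic-stability constant, hence is independent of $\alpha$. Your observation that the $\alpha$-dependence is confined to the sets $\Bset{\phi}{N}$ and their convergence rate, which the definition of uniform \shadname shadowing does not require to be uniform, is precisely the point.
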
 

\begin{maintheorem} 
    Let family 
 $\seq{f_{\alpha}}_{\alpha \in \paramset}$
 have uniform stochastic stability 
    with speed $\sigma$
    for observables from $\bspacec$ 
    and 
    for every $\alpha\in\paramset$
    the following
    conditions are satisfied:
    \begin{itemize}
        \item $f_\alpha$ is continuous,
        \item $f_\alpha$ has specification property,
        \item $\muparam{\alpha}$ is not a finite
            sum of $\delta$ measures,
        \item   the stationary measure
            $\mustparam{\alpha}{\eps}$ has
            exponential decay of correlations
            for observables from $\bspacec$ 
    \end{itemize}
    then 
 $\seq{f_{\alpha}}_{\alpha \in \paramset}$
    has uniform \shadname shadowing  with accuracy
    $\sigma$
    for observables from $\bspacec$.
\end{maintheorem}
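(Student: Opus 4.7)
The plan is to reduce the uniform statement to a pointwise application of Theorem \ref{mainth:exp_decay_specif_impl_stochshad}, and then to verify by tracking constants that the accuracy constant $C(\phi)$ can be chosen independently of $\alpha\in\paramset$. For each fixed $\alpha$, all four hypotheses of Theorem \ref{mainth:exp_decay_specif_impl_stochshad} are in force: continuity of $f_\alpha$, specification for $f_\alpha$, $\muparam{\alpha}$ not being a finite sum of $\delta$-measures, and exponential decay of correlations of $\mustparam{\alpha}{\eps}$ for observables from $\bspacec$. Moreover uniform stochastic stability supplies, for each $\phi\in\bspacec$, a constant $C(\phi)$ that controls $\distmod{\phi}{\muparam{\alpha}}{\mustparam{\alpha}{\eps}}$ and is already independent of $\alpha$. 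So Theorem \ref{mainth:exp_decay_specif_impl_stochshad} immediately yields stochastic shadowing with accuracy $\sigma$ for each $f_\alpha$; the only thing left to check is that the resulting constant can be taken uniform in $\alpha$.

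To see that this is the case, I would revisit the argument underlying Theorem \ref{mainth:exp_decay_specif_impl_stochshad} through the decomposition \eqref{eq:differmod_rep}. The three terms are estimated as follows. The term $\distmod{\phi}{\muparam{\alpha}}{\mustparam{\alpha}{\eps}}$ is bounded by $C(\phi)\sigma(\eps)$ by uniform stochastic stability, with $C(\phi)$ not depending on $\alpha$. The term $\distmod{\phi}{S_n(\xbar)}{\mustparam{\alpha}{\eps}}$ is bounded by a user-chosen tolerance $\delta$ on the set $\Aset{\phi}{\delta}{N}$ coming from property A, which holds for $\mustparam{\alpha}{\eps}$ by Remark \ref{rem:expdecay_impl_propA}. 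Finally, $\distmod{\phi}{S_n(p)}{S_n(\xbar)}$ is bounded by exploiting the specification property of $f_\alpha$: it produces a periodic point $p$ that shadows $\xbar$ in the classical sense over the relevant time window, and Remark \ref{rem:class_close_impl_erg} (together with approximation of $\phi\in\bspacec$ by Lipschitz functions on the compact manifold $M$) converts this into closeness of ergodic averages. Choosing $\delta$ to be a constant multiple of $\sigma(\eps)$, the overall constant in the bounds is of the form $C(\phi)+\mathrm{const}$, and none of the three pieces contributes a factor that depends on $\alpha$.

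The quantities that genuinely depend on $\alpha$ are the specification gap $N_\alpha(\eps)$, the rate in exponential decay of correlations of $\mustparam{\alpha}{\eps}$, and hence the speed at which $\mustparam{\alpha}{\eps}\bigl(\Bset{\phi}{N}\bigr)\to 1$ as $N\to\infty$. All of these only affect the threshold $N$ and the precise set $\Bset{\phi}{N}$ chosen for each $\alpha$ separately, neither of which is required by the definition of \emph{uniform} \shadname shadowing to be independent of $\alpha$. Thus the pointwise conclusions assemble into the uniform one.

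The step I expect to be the main obstacle is the specification-based construction of the shadowing orbit $p$: one must verify that the error in $\distmod{\phi}{S_n(p)}{\muparam{\alpha}}$ produced by this construction is ultimately controlled only by the tolerance $\delta$ one chooses (and not by some system-dependent quantity entering through the hypothesis that $\muparam{\alpha}$ is not a finite sum of $\delta$-measures). The natural way to handle this is to use that hypothesis in a purely qualitative way -- to exclude trivial periodic approximations -- while letting the quantitative closeness be absorbed into the tolerance $\delta$ used on $\Bset{\phi}{N}$. With this organisation, only the required $N$ and the set $\Bset{\phi}{N}$ depend on $\alpha$, exactly as permitted by the definition.
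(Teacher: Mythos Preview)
Your overall strategy---reduce to the non-uniform Theorem \ref{mainth:exp_decay_specif_impl_stochshad} and verify that $C(\phi)$ does not depend on $\alpha$---is exactly what the paper does; it simply states that the uniform proofs are repetitions of the non-uniform ones. Your observation that only the threshold $N$ and the sets $\Bset{\phi}{N}$ may depend on $\alpha$, while the multiplicative constant in front of $\sigma(\eps)$ does not, is also correct and is the whole content of the uniform upgrade.

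However, your description of how specification enters is wrong, and as written that step would fail. Specification shadows finite collections of \emph{orbit segments}, not pseudotrajectories; there is no mechanism by which specification hands you a periodic point $p$ that classically shadows an arbitrary realization $\xbar$ of $\Xparam{\eps}{\alpha}$, so the appeal to Remark \ref{rem:class_close_impl_erg} (and the detour through Lipschitz approximants) is unfounded. In the paper's argument specification is used only indirectly, via Sigmund's theorem: it gives weak* density of periodic measures among the invariant ones, and since $\muparam{\alpha}$ is not finitely supported one can pick a periodic point $p_n$ of period $n>N$ with $\distmod{\phi}{S_n(p_n)}{\muparam{\alpha}}<\sigma(\eps)$. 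Periodicity then keeps $\distmod{\phi}{S_r(p_n)}{\muparam{\alpha}}$ small for all $r>N$, with an error controlled by $\sup_{x\in M}\vmod{\phi(x)}/N$ and hence by $\sigma(\eps)$, not by any $\alpha$-dependent quantity. The remaining estimate $\distmod{\phi}{S_r(\xbar)}{S_r(p_n)}$ is obtained by the triangle inequality through $\mustparam{\alpha}{\eps}$ and $\muparam{\alpha}$, \emph{not} from pointwise closeness of $\xbar$ and the orbit of $p_n$. With this corrected mechanism all constants are visibly $\alpha$-independent, which is what you wanted.
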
 

Finally we state results concerning the example.

\begin{maintheorem}\label{mainth:lipshad_tent}
For a full measure (in $\tentparamset$) set of parameters
$s$
     for any $\alpha,L>0$ 
   the map $f_s$ does not have
   standard shadowing with accuracy
   $\sigma(\eps) = L\eps^\alpha$.

   Nevertheless for every $s\in \tentparamset$
the map $f_s$ has 
stochastic shadowing 
    with speed $\sigma(\eps) = \eps$
    for observables from $C(M)$
    for the family of uniform perturbations.
\end{maintheorem}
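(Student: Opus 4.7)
The statement has two independent parts: (a) failure of H\"older standard shadowing for a full-measure set of parameters, and (b) linear stochastic shadowing for all parameters under uniform perturbations. I address them in order.

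For (a), I would exploit recurrence of the critical orbit. For every $s \in \tentparamset$, Lasota--Yorke yields a unique absolutely continuous invariant measure $\mu_s$, and a standard Fubini argument on the parameter-phase product shows that on a full-measure set of $s$ the critical point $c=1/2$ is $\mu_s$-generic, so the forward orbit of $c$ returns to any fixed neighborhood of $c$ at positive density. Given $\alpha,L>0$ and small $\eps$, I would construct an $\eps$-pseudotrajectory $\xbar$ by following a $\mu_s$-typical initial orbit segment and inserting single $\eps$-perturbations flipping branches relative to $c$ at iterates $k_1 < k_2 < \cdots$ at which the orbit is $\eps$-close to $c$. A candidate $L\eps^\alpha$-shadow $p$ must commit to the flipped branch at each $k_j$, forcing $f_s^{k_j}(p)$ to lie in the opposite side of $c$ within distance $L\eps^\alpha+\eps$, which pulls back to a constraint of diameter $(L\eps^\alpha+\eps)\,s^{-k_j}$ on $p$. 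Arranging the sequence of flips so that the associated itinerary is inadmissible for $f_s$ (for generic $s$ the kneading data specifies a proper subshift, so long forbidden words abound) makes the intersection of these constraints with the $L\eps^\alpha$-ball around $x_0$ empty, contradicting the shadowing assumption. A countable intersection over rational pairs $(\alpha,L)$ produces a single full-measure parameter set.

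For (b), I would apply Theorem~\ref{mainth:exp_decay_specif_impl_stochshad} with $\bspace$ the BV functions on $[0,1]$ intersected with $C(M)$, so that $\bspacec$ contains the Lipschitz observables. The hypotheses are verified as follows: $f_s$ is continuous; topological mixing of $f_s$ on $\tentparamset$ is classical, and Blokh's theorem (cited in the paper) then gives specification; $\mu_s$ is absolutely continuous, hence not a finite $\delta$-sum; the stationary measure $\mu_\eps$ has exponential decay of correlations on BV observables via a uniform-in-$\eps$ spectral gap of the annealed transfer operator of $X_\eps$, which inherits the Lasota--Yorke inequality of $f_s$ and is moreover improved by the uniform-noise smoothing kernel; and stochastic stability for $f_s$ with linear speed $\eps$ (in fact strong stability, $\normban{\rho_s-\rho_{s,\eps}}_{L^1}=\obig{\eps}$) is classical for piecewise expanding maps under uniform noise (Baladi--Viana, Keller). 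Theorem~\ref{mainth:exp_decay_specif_impl_stochshad} then yields stochastic shadowing with speed $\eps$ for observables in $\bspacec$. To pass from $\bspacec$ to general $\phi \in C(M)$, I would use the density of Lipschitz functions in $(C([0,1]),\normban{\cdot}_\infty)$ together with strong stability: for a fixed $\phi \in C(M)$, approximation by a Lipschitz $\phi_\delta$ with sup-norm error $\delta$ and Lipschitz constant controlled by the modulus of continuity of $\phi$ transfers the shadowing inequality, picking up a $\phi$-dependent but finite constant $C(\phi)$.

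The main obstacle is (a): making the flipped-itinerary construction rigorous requires choosing the flipping times so that the resulting pseudotrajectory remains a genuine $\eps$-pseudotrajectory while its kneading becomes inadmissible for $f_s$, and doing this uniformly in $(\alpha,L)$ on a single full-measure parameter set. Both issues reduce to equidistribution of the critical orbit and structural features of the kneading subshift, which are standard for the tent family. Part (b) is essentially an assembly of classical perturbation theory for piecewise expanding interval maps, with the only nontrivial point being the sup-norm extension from $\bspacec$ to $C(M)$.
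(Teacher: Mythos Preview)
Your treatment of part (b) is essentially the paper's own argument: invoke Theorem~\ref{mainth:exp_decay_specif_impl_stochshad} after checking specification (via Blokh), non-atomicity of $\mu_s$, exponential decay for $\mu_\eps$ via a Lasota--Yorke/Keller--Liverani spectral gap uniform in $\eps$, and linear strong stochastic stability from the Keller/Blank--Keller circle of results. The paper proceeds in exactly this way and records the conclusion as a corollary. (Your density step from $\bspacec$ to all of $C(M)$ is a reasonable addendum, though note that the Lipschitz constant of $\phi_\delta$ necessarily blows up as $\delta\to 0$ when $\phi$ is merely continuous, so one cannot literally produce a single finite $C(\phi)$ multiplying $\eps$; this is a wrinkle the paper does not dwell on either.)

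For part (a) the paper takes a far simpler route than yours, and your construction has a genuine gap. The paper uses a \emph{single} perturbation: set $x_0=c$, $x_1=f(c)+\eps$, $x_k=f^{k-1}(x_1)$ for $k\ge 2$. Any shadowing point $y$ must satisfy $f(y)\le f(c)$, hence $x_\eps:=f^2(y)\ne x_2$, and in fact $|x_\eps-x_2|\ge \eps$. As long as the interval $\langle f^k(x_\eps),f^k(x_2)\rangle$ avoids $c$ it is stretched by $s$ at each step, so the shadowing error is at least $s^{n(\eps)-1}\eps$, where $n(\eps)$ is the first hitting time of $c$ by that interval. For parameters with non-periodic critical orbit (a full-measure set by Bruin/Schnellmann) one has $n(\eps)\to\infty$, which kills any H\"older accuracy. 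No recurrence of the critical orbit, no kneading combinatorics, no Fubini argument is needed.

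The gap in your plan is the ``branch flipping'' step. For the tent map $f_s$ one has $f_s(c+\delta)=f_s(c-\delta)$, so reflecting a pseudotrajectory point across $c$ leaves the forward image unchanged: the flip produces no new itinerary and the subsequent orbit coincides with the unflipped one. Even if you reinterpret the flip as perturbing $x_{k_j}$ to the other side of $c$ and then following the true orbit, the shadowing point is only constrained to satisfy $|f_s^{k_j}(p)-x_{k_j}|<L\eps^\alpha$; since $|x_{k_j}-c|<\eps\ll L\eps^\alpha$ when $\alpha<1$, the shadow's orbit can lie on \emph{either} side of $c$, so it is not forced to ``commit to the flipped branch''. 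Consequently the pullback constraints of diameter $(L\eps^\alpha+\eps)s^{-k_j}$ you write down do not arise, and the inadmissible-itinerary contradiction does not go through. The one-perturbation/expansion argument in the paper sidesteps all of this by producing a \emph{lower} bound on the separation of genuine orbit from pseudotrajectory, rather than trying to pin down the shadow's symbolic itinerary.
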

\begin{rem}
    However despite the same accuracy function
    suits all the $f_s$ the stochastic shadowing may be
    nonuniform since we can not guarantee 
    uniformity of constants $C(\phi)$.
\end{rem}

\begin{maintheorem} \label{mainth:unif_tangent}
    Let $s\in\tentparamset$ be a nonperiodic parameter.
    If $\seq{g_t}$ is a $C^{2,2}$-perturbation of
    $f_s$, tangent to its topological class,
    then there exists $t_0>0$ such that 
    the family $\seq{g_t}_{\vmod{t}<t_0}$
    has uniform 
    stochastic shadowing 
    with speed $\sigma(\eps) = \eps$ with
    respect to $\seq{X_\eps}$
    for observables from $C(M)$ 
    for the family of uniform perturbations.
\end{maintheorem}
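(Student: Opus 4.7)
The plan is to apply the uniform version of Theorem \ref{mainth:exp_decay_specif_impl_stochshad} with $\bspace = \bspacec = C(M)$ to the family $\seq{g_t}_{\vmod{t}<t_0}$. Checking the hypotheses will reduce the statement to a uniform stochastic stability claim, which is precisely what the tangency-to-topological-class assumption buys us through the linear response theory for piecewise expanding unimodal maps.

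First I would verify the purely topological and measure-theoretic hypotheses uniformly in $t$. Each $g_t$ is continuous by definition of a $C^{2,2}$-perturbation. Because $s$ is nonperiodic the kneading invariant of $f_s$ is aperiodic, so $f_s$ is topologically mixing; since $\seq{g_t}$ is tangent to the topological class of $f_s$, up to $O(t^2)$ each $g_t$ is conjugate to a map in the same topological class, so it is still topologically mixing for small enough $\vmod{t}$. By the result of Blokh \cite{BLOKH}, topological mixing for interval maps implies specification. The physical measure $\muparam{t}$ of a piecewise expanding unimodal map is absolutely continuous with a BV density, so it is certainly not a finite sum of $\delta$-measures.

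The next point is exponential decay of correlations for the stationary measure $\mustparam{\eps}{t}$ of the chain $\Xparam{\eps}{t}$, for observables in $C(M)$. The uniform $\eps$-perturbation preserves the piecewise expanding structure and admits a transfer operator of Lasota--Yorke type on $BV([0,1])$; standard quasi-compactness gives a spectral gap, and hence exponential decay, initially for BV observables. Approximating a continuous observable by BV functions (at the price of absorbing the modulus-of-continuity error into the constant $C(\tau',\phi,\psi)$) yields exponential decay for $\phi,\psi\in C(M)$, and Remark \ref{rem:expdecay_impl_propA} upgrades this to property A.

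The remaining ingredient, and in my view the real content, is uniform stochastic stability with speed $\sigma(\eps)=\eps$ for $C(M)$ observables, with a constant $C(\phi)$ independent of $t$. The first half of Theorem \ref{mainth:lipshad_tent} already provides stochastic stability with speed $\eps$ for each individual $f_s$; the tangency to the topological class of $f_s$ provides Lipschitz (indeed differentiable) dependence in $t$ of $\muparam{t}$ along Baladi--Smania-style linear response, and simultaneously a uniform Lasota--Yorke inequality for $\Xparam{\eps}{t}$ across the family. Together these promote the per-parameter stability of Theorem \ref{mainth:lipshad_tent} to a single constant $C(\phi)$ working for all $\vmod{t}<t_0$. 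Feeding the verified hypotheses into the uniform version of Theorem \ref{mainth:exp_decay_specif_impl_stochshad} then yields uniform \shadname shadowing with accuracy $\eps$ for observables from $C(M)$.

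The principal obstacle I expect is precisely the uniformity of the constants across the two-parameter family indexed by $(t,\eps)$: one needs a uniform spectral gap for the transfer operators of the perturbed chains $\Xparam{\eps}{t}$ throughout $\vmod{t}<t_0$ and $\eps<\eps_0$, and this is exactly what would fail at a parameter where topological mixing or the piecewise expanding structure degenerates. The assumption that $\seq{g_t}$ is tangent to the topological class of $f_s$ with $s$ nonperiodic is the minimal hypothesis that rules out such bifurcations and delivers the required uniform bound.
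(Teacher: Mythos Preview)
Your proposal is correct and follows essentially the same route as the paper: establish uniform (strong) stochastic stability with linear speed via the Keller--Liverani framework (using \cite{BALADI_SMANIA_PE} for the $t$-dependence of $L_{t,0}$ and \cite{KELLER82} for the $\eps$-dependence of $L_{t,\eps}$), then feed this into the uniform version of Theorem~\ref{mainth:exp_decay_specif_impl_stochshad}. The paper's presentation is terser---it simply records uniform strong stochastic stability as a corollary of those cited results and then states the shadowing conclusion---whereas you spell out the specification and decay-of-correlations verifications explicitly, but the logical skeleton is identical.
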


\section{Proofs}

\begin{theorem}[Theorem \ref{mainth:stshad_impl_ststab}]
    If the map $f$ has \shadname shadowing 
    with accuracy $\sigma$ 
    for observables from $\bspace$ 
    then
    it has stochastic stability with speed $\sigma$
    for observables from $\bspace$.
\end{theorem}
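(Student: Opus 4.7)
The plan is to apply the triangle inequality to the three bounds in the definition of stochastic shadowing. Fix an observable $\phi \in \bspace$ and $\eps \in (0,\eps_0)$. The definition of \shadname shadowing with accuracy $\sigma$ provides a constant $C(\phi)$ and, for each $N$, a set $\Bset{\phi}{N}$ of realizations of $X_\eps$ with $\mu_\eps(\Bset{\phi}{N}) \to 1$ as $N\to\infty$, such that for every $\xbar \in \Bset{\phi}{N}$ there is a point $p\in M$ satisfying the three inequalities in the definition for every $n>N$.

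The first step is to fix $N$ large enough that $\mu_\eps(\Bset{\phi}{N}) > 0$; this is possible because the measures tend to $1$. Then $\Bset{\phi}{N}$ is nonempty, so I pick any realization $\xbar \in \Bset{\phi}{N}$ and its associated shadowing point $p$. For any $n>N$, the triangle inequality gives
\begin{align*}
  \distmod{\phi}{\mu}{\mu_\eps}
  &\leq \distmod{\phi}{\mu}{S_n(p)}
     + \distmod{\phi}{S_n(p)}{S_n(\xbar)} \\
  &\qquad + \distmod{\phi}{S_n(\xbar)}{\mu_\eps} \\
  &\leq 3 C(\phi)\, \sigma(\eps),
\end{align*}
by directly invoking the three bounds supplied by the \shadname shadowing definition. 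Since the left-hand side does not depend on $n$, this yields the desired estimate with constant $3C(\phi)$, which is the definition of stochastic stability with speed $\sigma$ for observables from $\bspace$ (the constant in front of $\sigma(\eps)$ is allowed to depend on $\phi$).

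There is no real obstacle here: the entire content of the statement is that the hypothesis bounds the three differences that, by the triangle inequality, control $\distmod{\phi}{\mu}{\mu_\eps}$. The only point requiring a short justification is that the selected set $\Bset{\phi}{N}$ is nonempty for sufficiently large $N$, which follows immediately from $\mu_\eps(\Bset{\phi}{N}) \to 1$.
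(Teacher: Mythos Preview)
Your proof is correct and is essentially identical to the paper's own argument: fix $\phi$ and $\eps$, choose $N$ so that $\Bset{\phi}{N}$ is nonempty, pick any $\xbar$ and its shadowing point $p$, and apply the triangle inequality to the three estimates from the definition to obtain $\distmod{\phi}{\mu}{\mu_\eps}\leq 3C(\phi)\sigma(\eps)$. The only cosmetic difference is that you explicitly justify nonemptiness via $\mu_\eps(\Bset{\phi}{N})\to 1$, whereas the paper simply asserts it.
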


\begin{proof}
    Let $\sigma$ be an \shadname 
    shadowing accuracy function for $f$.
    Let $\eps_0$ be a number from the definition of \shadname
    shadowing.
    Fix $\eps < \eps_0$ 
    and $\phi\in \bspace$.
    There exists a set $\Bset{\phi}{n}$ from 
    the definition of \shadname shadowing and 
    a natural number $N$ such that 
    $\Bset{\phi}{N} \neq \emptyset$.

    Fix a natural $n>N$.  
    Fix an $\eps$-pseudotrajectory $\xbar\in\Bset{\phi}{N}$. 
    Then there exists a point $p$  such that
    \begin{gather*}
        \distmod{\phi}{\mu}{\mu_\eps} \leq 
    \distmod{\phi}{\mu}{ S_n( \etraj{x} )} + 
    \distmod{\phi}{ S_n(\etraj{x})}{ S_n( \xbar) } + \\
    +
    \distmod{\phi}{ S_n(\xbar)}{ \mu_\eps} \leq 3C(\phi) \sigma(\eps).
    \end{gather*}
    Thus we have
    stochastic stability with the desired speed.

\end{proof}

\begin{corollary}
    Lower bounds on the speed of stochastic stability would imply lower bounds
    on the accuracy of \shadname shadowing.
\end{corollary}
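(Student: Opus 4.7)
The plan is to observe that this corollary is simply the contrapositive of Theorem \ref{mainth:stshad_impl_ststab}, which was just proved: if stochastic shadowing holds with accuracy $\sigma$ for observables from $\bspace$, then stochastic stability automatically holds with speed $\sigma$ for observables from $\bspace$. Hence every obstruction to achieving a given speed of stochastic stability is, a fortiori, an obstruction to achieving the same function as a stochastic shadowing accuracy.

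Concretely, suppose one has established a \emph{lower bound} of the form: there is a function $\sigma_0:\Rplus\to\Rplus$ such that $f$ admits no speed of stochastic stability $\sigma$ (for observables from $\bspace$) with $\sigma(\eps) = o(\sigma_0(\eps))$ as $\eps\to 0$. If $f$ had stochastic shadowing with some accuracy $\sigma$ satisfying $\sigma(\eps) = o(\sigma_0(\eps))$, then Theorem \ref{mainth:stshad_impl_ststab} would promote this $\sigma$ to a speed of stochastic stability, contradicting the assumed lower bound. Therefore no such accuracy of stochastic shadowing is achievable either.

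Since the argument is a one-line contraposition, there is no real technical obstacle; the only care needed is in formalising what is meant by a ``lower bound'' on a speed or accuracy function, given the non-uniqueness noted in the remark after the definition of strong stochastic stability, and in making sure the class of observables $\bspace$ matches on both sides of the implication. Once those conventions are fixed, the proof is immediate from the theorem.
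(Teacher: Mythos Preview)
Your proposal is correct and matches the paper's approach exactly: the corollary is stated immediately after Theorem~\ref{mainth:stshad_impl_ststab} with no separate proof, precisely because it is the evident contrapositive you describe. There is nothing to add.
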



\begin{theorem}
    If the map $f$ has stochastic stability with speed
    $\gamma$ 
    for observables from $\fspacel$
    and 
    classical shadowing with accuracy $\sigma$
    and property A
    for observables from $\fspacel$
    then
    it has \shadname shadowing with accuracy
    $\max(\sigma,\gamma)$ for 
    observables from $\fspacel$.
\end{theorem}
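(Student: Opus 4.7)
The plan is to build the ``good'' set $\Bset{\phi}{N}$ directly from property A, produce a shadowing orbit for each realization in it via classical shadowing, and patch together the three required estimates using the triangle inequality together with stochastic stability.

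Concretely, fix $0<\eps<\eps_0$, $\phi\in\fspacel$ and $N\in\Nb$. Let $L(\phi)$ denote the Lipschitz constant of $\phi$ and $C_{\mathrm{stab}}(\phi)$ the constant from stochastic stability. I set $\delta := \max(\sigma(\eps),\gamma(\eps))$ and take $\Bset{\phi}{N} := \Aset{\phi}{\delta}{N}$ to be the set supplied by property A with parameters $(\phi,\delta,N)$. For each realization $\xbar = \seq{x_k}\in \Bset{\phi}{N}$, classical shadowing furnishes a point $p = p(\xbar)\in M$ with $\dist(x_k,f^k(p))\leq \sigma(\eps)$ for every $k\geq 0$.

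With these choices in hand, I verify the three defining inequalities for $n>N$. The bound $\distmod{\phi}{S_n(\xbar)}{\mu_\eps}<\delta$ is immediate from the defining property of $\Aset{\phi}{\delta}{N}$. The pointwise shadowing estimate $\dist(x_k,f^k(p))\leq\sigma(\eps)$ combined with Remark~\ref{rem:class_close_impl_erg} gives $\distmod{\phi}{S_n(\xbar)}{S_n(p)}\leq L(\phi)\sigma(\eps)$. The remaining estimate $\distmod{\phi}{S_n(p)}{\mu}$ is obtained by routing through $\mu_\eps$: the triangle inequality combined with the two bounds just mentioned and the stochastic stability bound $\distmod{\phi}{\mu_\eps}{\mu}\leq C_{\mathrm{stab}}(\phi)\gamma(\eps)$ dominates this quantity by a multiple of $\max(\sigma(\eps),\gamma(\eps))$. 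Setting $C(\phi) := L(\phi) + 1 + C_{\mathrm{stab}}(\phi)$ produces a single constant that works for all three inequalities. Finally, $\mu_\eps(\Bset{\phi}{N})\to 1$ as $N\to\infty$ is just the corresponding statement in property A.

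The step I anticipate as the main obstacle is the control of $\distmod{\phi}{S_n(p)}{\mu}$: the classical shadowing point $p$ is essentially arbitrary, and carries no a priori ergodic information with respect to $\mu$. The cost of circumventing this directly forces both hypotheses to be used together, and it is precisely this triangulation through $\mu_\eps$ that explains why $\max(\sigma,\gamma)$ — rather than either of $\sigma$ or $\gamma$ alone — is the natural final accuracy.
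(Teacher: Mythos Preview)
Your proof is correct and follows essentially the same route as the paper: define $\Bset{\phi}{N}$ via property~A, invoke classical shadowing to produce $p$, use Remark~\ref{rem:class_close_impl_erg} for the $S_n(\xbar)$--$S_n(p)$ bound, and triangulate through $\mu_\eps$ (property~A plus stochastic stability) to control $\distmod{\phi}{S_n(p)}{\mu}$. The only cosmetic difference is that the paper takes $\delta=\sigma(\eps)$ rather than $\max(\sigma(\eps),\gamma(\eps))$ in property~A, which changes nothing essential.
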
 
\begin{proof}
    Fix $\eps < \eps_0$ and 
    a $C$-Lipschitz function $\phi:M\to \Rb$.
    Set $\Bset{\phi}{N} = \Aset{\phi}{\sigma(\eps)}{N}$
    and fix $N$ large enough
    so that
    $\Bset{\phi}{N} \neq \emptyset$.

    Fix $\xbar\in \Bset{\phi}{N}$.
    By Remark \ref{rem:class_close_impl_erg} there is a point $x$
    such that 
    \begin{equation*}
        \distmod{\phi}{S_n(\xbar)}{ S_n(\etraj{x}) }
    < C\sigma(\eps) ,\quad n\in\Nb.
    \end{equation*}
    We have the following estimate for $n>N$
    \begin{gather*}
        \distmod{\phi}{\mu}{ S_n(\etraj{x} )} \leq 
        \distmod{\phi}{\mu}{\mu_\eps} +
        \distmod{\phi}{\mu_\eps}{ S_n(\xbar)} + \\
        + \distmod{\phi}{ S_n(\xbar)}{ S_n(\etraj{x}) }
        \leq
        \gamma(\eps) + (1+C)\sigma(\eps).
    \end{gather*}
\end{proof}

The following readily follows from Remark \ref{rem:expdecay_impl_propA}.
\begin{corollary}[Theorem \ref{mainth:ststab_impl_stshad}]
    If the map $f$ has stochastic stability with speed
    $\gamma$ 
    for observables from $\fspacel$
    and 
    classical shadowing with accuracy $\sigma$ 
    and
    exponential decay of correlations 
    for observables from $\fspacel$
    then
    it has \shadname shadowing with accuracy
    $\max(\sigma,\gamma)$ for observables
    from $\fspacel$.
\end{corollary}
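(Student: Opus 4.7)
The plan is almost immediate: the corollary reduces to the previous theorem plus one applied observation. The previous theorem requires \emph{property A} for observables from $\fspacel$, whereas here we are given the stronger hypothesis of \emph{exponential decay of correlations} for the same class. So the only substantive step is to verify that exponential decay of correlations implies property A, which is exactly the content of Remark \ref{rem:expdecay_impl_propA}.

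First I would invoke that remark. The mechanism is a Chebyshev-type estimate: exponential decay of correlations for $\phi \in \fspacel$ yields a bound of the form
\[
    \mu_\eps\bigl(\setdef{\xbar}{\distmod{\phi}{S_n(\xbar)}{\mu_\eps} > \delta}\bigr) \leq C(\phi)\,e^{-I(\phi,\delta)\,n}.
\]
Summing this geometric tail from $n=N$ onward produces a set $\Aset{\phi}{\delta}{N}$ of realizations on which $\distmod{\phi}{S_n(\xbar)}{\mu_\eps} \leq \delta$ holds for every $n > N$, and whose measure tends to $1$ as $N \to \infty$. This is precisely property A for observables from $\fspacel$.

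With property A at our disposal, the three hypotheses of the previous theorem---stochastic stability with speed $\gamma$, classical shadowing with accuracy $\sigma$, and property A, all for observables from $\fspacel$---are simultaneously satisfied, and its conclusion of \shadname shadowing with accuracy $\max(\sigma,\gamma)$ for observables from $\fspacel$ transfers verbatim. There is no genuine obstacle here; the only bookkeeping point is to make sure the function class for which exponential decay is assumed coincides with the class for which property A is invoked in the proof of the previous theorem, and this matches by hypothesis.
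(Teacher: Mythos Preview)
Your proposal is correct and follows exactly the paper's approach: the paper's proof of this corollary is the single sentence ``The following readily follows from Remark~\ref{rem:expdecay_impl_propA},'' i.e.\ exponential decay of correlations implies property~A, after which the preceding theorem applies directly.
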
 




\begin{rem}
    It should be noticed that 
    a conceptually similar statement 
    was proposed in \cite{BLANK_EPS_TRAJ}.
\end{rem}

\begin{rem}
    The condition of presence of classical shadowing
    is not necessary.
    If we take $M = S^1$ and $f$ to 
    be an irrational rotation then
    it is easy to see that the system has 
    stochastic shadowing with respect
    to family of uniform perturbations while it 
    does not have
    classical shadowing with respect to the same 
    family of perturbations.
\end{rem}

\begin{rem}
    One can try to go further in quantifying the property
    of stochastic shadowing, estimating
    dependence of $N$ and $\mu_\eps(\Bset{\phi}{N})$
    on $\eps$ to distinguish between
    rotations and hyperbolic systems.
\end{rem}

\begin{rem}
    A simple note to make is 
    that if we drop the requirement that the measure
    $\mu$ is ergodic and physical and take $f=Id$ then 
    it is easy to see that the system does not
    have stochastic shadowing while it has
    stochastic stability (with any speed for a wide
    choice of perturbations).
\end{rem}



\begin{theorem}[Theorem \ref{mainth:exp_decay_specif_impl_stochshad}]
    If $f$ is continuous and has stochastic stability 
    with speed $\sigma$
    for observables from $\bspacec$ 
    and the following
    conditions are satisfied:
    \begin{itemize}
        \item $f$ has specification property,
        \item $\mu$ is not a finite
            sum of $\delta$ measures,
        \item   the stationary measure $\mu_\eps$ has
            exponential decay of correlations
            for observables from $\bspacec$
    \end{itemize}
    then 
    it has \shadname shadowing  with accuracy
    $\sigma$
    for observables from $\bspacec$.
\end{theorem}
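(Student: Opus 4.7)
The plan is to follow the structure of Theorem \ref{mainth:ststab_impl_stshad}, but to replace the use of classical shadowing (unavailable here) with the specification property when constructing the approximating point $p$. By Remark \ref{rem:expdecay_impl_propA}, the hypothesis of exponential decay of correlations for $\mu_\eps$ implies property A, so I set $\Bset{\phi}{N} := \Aset{\phi}{\sigma(\eps)}{N}$. This immediately yields the first inequality $\distmod{\phi}{S_n(\xbar)}{\mu_\eps} \leq \sigma(\eps)$ for every $\xbar \in \Bset{\phi}{N}$ and $n>N$ and gives $\mu_\eps(\Bset{\phi}{N}) \to 1$. Since stochastic stability supplies $\distmod{\phi}{\mu}{\mu_\eps} \leq C(\phi)\sigma(\eps)$, it suffices to construct $p$ satisfying the third inequality $\distmod{\phi}{S_n(\xbar)}{S_n(p)} = O(\sigma(\eps))$ uniformly for $n>N$; the second inequality then follows by the triangle inequality.

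To construct $p$ from a fixed $\xbar \in \Bset{\phi}{N}$, I pick $\eps'>0$ small enough that $\omega_\phi(\eps') \leq \sigma(\eps)$, where $\omega_\phi$ is the uniform modulus of continuity of $\phi$ on the compact manifold $M$. Since $\xbar$ is only a pseudotrajectory and not a true orbit, I partition the time axis into blocks of length $m$ on which the real forward orbit of $x_{jm}$ stays within $\eps'/2$ of the $j$-th block of $\xbar$; the cumulative drift of the pseudotrajectory over $m$ steps is controlled by iterating the modulus of continuity of $f$, and this bounds the admissible size of $m$ in terms of $\eps$ and $\eps'$. I then apply specification to the resulting finite collection of orbit pieces, inserting gaps of length $N(\eps'/2)$ between consecutive shadowed blocks, which produces a periodic point $p$ of some large period $T$ whose orbit $\eps'$-shadows $\xbar$ outside a set of times of density at most $N(\eps'/2)/(m+N(\eps'/2))$. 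The hypothesis that $\mu$ is not a finite sum of $\delta$ measures is used here to guarantee, via a standard consequence of specification \cite{SIGMUND_SPECIF}, the existence of a rich family of reference orbits whose empirical averages converge to $\mu$; I use this family to fill the remaining portion of the period of $p$ so that $\distmod{\phi}{S_{T-1}(p)}{\mu}$ is itself $O(\sigma(\eps))$.

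The third inequality is now verified by splitting $\int \phi \, dS_n(p) - \int \phi \, dS_n(\xbar)$ into the contribution of shadowed indices (each term bounded by $\omega_\phi(\eps')$) and gap indices (each bounded by $2\|\phi\|_\infty$ and occupying density at most $N(\eps'/2)/(m+N(\eps'/2))$); both pieces are $O(\sigma(\eps))$ once $\eps'$ and $m$ are suitably chosen. The main obstacle is making this bound uniform in $n>N$: once $n$ crosses the period $T$, I must write $S_n(p)$ as a convex combination of the periodic average $S_{T-1}(p)$ and a short residual average, and control the residual by $O(T/n)\|\phi\|_\infty$. Balancing this against the requirement that $\mu_\eps(\Bset{\phi}{N}) \to 1$ forces a careful hierarchy among the parameters $m$, the gap length, the reference-orbit length, and the period $T$. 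Once this hierarchy is arranged and all multiplicative factors are absorbed into $C(\phi)$, stochastic shadowing with accuracy $\sigma$ for observables in $\bspacec$ follows.
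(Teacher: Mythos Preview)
Your approach differs substantially from the paper's and has a genuine gap.

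The paper never attempts to make $p$ track $\xbar$. Specification enters only through Sigmund's theorem that periodic measures are weak* dense among invariant probability measures. One chooses a periodic point $p$, \emph{independent of $\xbar$}, whose orbit measure is within $\sigma(\eps)$ of $\mu$ for the observable $\phi$; the hypothesis that $\mu$ is not a finite sum of $\delta$-measures is used only to guarantee that the period of $p$ can be taken larger than $N$. For a periodic point an elementary estimate shows that $\distmod{\phi}{S_r(p)}{\mu}$ stays $O(\sigma(\eps))$ for every large $r$, which is the second inequality. The third inequality then comes for free from the triangle inequality
\begin{equation*}
\distmod{\phi}{S_n(\xbar)}{S_n(p)} \leq \distmod{\phi}{S_n(\xbar)}{\mu_\eps} + \distmod{\phi}{\mu_\eps}{\mu} + \distmod{\phi}{\mu}{S_n(p)},
\end{equation*}
the three terms being handled by property~A, stochastic stability, and the construction of $p$ respectively.

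Your block-by-block shadowing, by contrast, cannot achieve accuracy $\sigma$. You need the gap density $N(\eps'/2)/(m+N(\eps'/2))$ to be $O(\sigma(\eps))$, but the drift constraint forces $m$ to be at most of order $\log(\eps'/\eps)$ when $f$ is Lipschitz (and no better for merely continuous $f$), while the requirement $\omega_\phi(\eps') \leq C(\phi)\sigma(\eps)$ forces $\eps' \to 0$ and hence $N(\eps'/2) \to \infty$ as $\eps\to 0$. Concretely, for $\sigma(\eps)=\eps$ and Lipschitz $\phi$ one has $\eps' = O(\eps)$, so $m = O(1)$ while $N(\eps'/2) \to \infty$, and the gap density tends to $1$ rather than to $0$. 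There is no ``suitable choice'' of $\eps'$ and $m$ that makes both contributions $O(\sigma(\eps))$; the balancing you defer to the last paragraph is in general impossible. The subsidiary use you make of the hypothesis on $\mu$ (filling part of the period with a $\mu$-typical reference segment) is likewise more elaborate than what the argument actually needs.
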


\begin{proof}
    Fix $\phi\in \bspacec$
    and $0<\eps<\eps_0$.

    By Remark \ref{rem:expdecay_impl_propA} 
    the system has property A.
    Set $\Bset{\phi}{N} = \Aset{\phi}{\sigma(\eps)}{N}$
    and fix $N$ large enough so that $\Bset{\phi}{N} \neq \emptyset$
    and  $2/N < \sigma(\eps)$.
    Fix $\xbar\in \Bset{\phi}{N}$.

    Presence of specification implies 
    that periodic measures (measures, supported on periodic orbits of $f$) 
    are weak* dense among all invariant
    probability measures for $f$ (see \cite{SIGMUND_SPECIF}).

    Since $\mu$ is not a finite sum of $\delta$-measures,
    to approximate it by a periodic measure sufficiently good, the period should
    be large enough.
    Thus as $\phi$ is continuous,
    there exists a periodic point $p_n$  of period $n > N$
    such that 
    \begin{equation*}
        \distmod{\phi}{S_n( \etraj{p_n})}{ \mu } < \sigma(\eps).
    \end{equation*}

    Consider any integer $r = nk+m > N$ where $0 <m < n$.
    Note that
    \begin{gather*}
        Y = \int \phi d S_{r} (p_n) - \int \phi d S_{nk} (p_n)   =  \\
         =\sum_{i= 0}^{nk-1} \enbrace{ \frac{1}{nk} + \frac{1}{nk+m}} \phi(f^i(x)) -
        \sum_{i=0}^{m} \frac{1}{nk+m} \phi(f^i(x)) = \\
        = \frac{1}{nk+m} \enbrace{ 
            \sum_{j=0}^{m} \sum_{i=0}^{nk-1} \frac{\phi(f^i(x))}{nk} -  
            \sum_{j=0}^{m} \sum_{i=0}^{nk-1} \frac{\phi(f^j(x))}{nk} 
        }.
    \end{gather*}

    Then
    \begin{equation*}
        \vmod{Y}  \leq
         \frac{2(m+1)}{nk+m}
         \sup_{x\in\mfd} \vmod{\phi(x)}  \leq
         \frac{2}{k}   < \frac{2}{N} < \sigma(\eps)       .
    \end{equation*}

    Therefore we have 
    \begin{equation*}
        \distmod{\phi}{S_r(p_n)}{\mu}
        < \sigma(\eps) .
    \end{equation*}

    This implies that
    \begin{gather*}
        \distmod{\phi}{S_{r}(\xbar)}{S_{r}(\etraj{p_n} )} \leq \\
        \leq      
        \distmod{\phi}{S_{r}(\xbar)}{\mu_\eps} +
        \distmod{\phi}{\mu_\eps}{ \mu} 
        +    \\   
        +         
        \distmod{\phi}{\mu}{ S_{r}(\etraj{p_n} ) }  
        %
        \leq  3 \sigma(\eps)   .
    \end{gather*}

\end{proof}

\begin{rem}
    In fact, a summable decay of correlations
    is probably enough to get the same conclusion.
\end{rem}

\begin{rem}
    There are properties weaker than the specification property
    that can guarantee density of periodic measures
    among all invariant ones (see \cite{GELFERT_KWIET, KWIET_STUD}).
\end{rem}



The proofs of uniform versions of the above theorems are 
just repetitions of proofs of the non-uniform versions.





\end{rem}

Despite there is a vast literature
on stochastic stability
(see references in \cite{VIANA_ONLINE} and more recent in
\cite{SHEN_STRIEN_NEUTRAL_STSTAB, SHEN_UNIMOD_STSTAB}),
there are not many results giving exact form of
its speed. 
See \cite{BALADI_YOUNG} for a result about
expanding maps of a circle.
There exist some numerical studies of speed 
of stochastic stability in different situations
in \cite{LIN_NUM_SPEED}.

However these speeds can sometimes be
obtained rather easily
by direct applications of results of
Keller-Liverani \cite{KELLER_LIVERANI}.
%
%
We give an example when it happens in Section \ref{sec:ex}.




\section{Example} \label{sec:ex}

It was shown in \cite{YORKE_TENT_SHAD} that
for almost all parameters $s$ tent maps $f_s$ have shadowing property.
However we are going to show that it is by no means controllable, i.e.
not Lipschitz
and not uniform.
Afterwards we show that
nevertheless tent maps have stochastic shadowing
with linear accuracy function.

Let $M = [0,1]$.
Set
\begin{gather*}
    E_{m,\delta} = \setdef{f^m(y)}{ \vmod{f^i(y) - f^i(c)} \leq
\delta, \quad 0\leq i\leq M}, \\
    m(\delta) = \inf \setdef{m\in\Nb}{ c\in E_{m,\delta} }.
\end{gather*}
Here we mean that $\inf(\emptyset)=\infty$.

We say that a parameter $s\in\tentparamset$ is $N$-periodic 
if the critical point $c$ is periodic for $f_s$
with (minimal) period equal to $N$.

For $a,b\in \mfd$
denote 
\begin{equation*}
    \abr{a,b} = \begin{cases}
        [a,b], & a<b;\\
        [b,a], & a>b .
    \end{cases}
\end{equation*}

For an $N$-periodic $s$ denote
\begin{equation*}
    \xi_s = \min_{ 0 < k < N} { \vmod{c - f_s^k(c) }}  
\end{equation*}
and for $\eps,\delta>0$ set
\begin{equation*}
    n_s(\delta,\eps) = \min \setdef{ n \in \Nb}{ c \in 
    \abr{ f^n(f(c)-\delta), f^n(f(c)+\eps) } }.
\end{equation*}

\begin{lm}
    If $s$ is $N$-periodic, then for every 
    $\delta,\eps < \xi_s s^{-N} /2$ we have
    \begin{gather*}
        n_s(\delta,\eps) = N-1 ,\\
        m(\delta) = N-1 .
    \end{gather*}
\end{lm}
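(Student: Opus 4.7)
The plan is to exploit the piecewise linear structure of $f_s$: on any interval (and along any pair of orbits) avoiding the critical point $c$, the map is affine with slope $\pm s$, so lengths and pointwise distances are multiplied by exactly $s$ per iterate. The smallness assumption $\delta,\eps<\xi_s s^{-N}/2$ is calibrated so that through $N-1$ iterates this $s$-fold expansion does not overwhelm the minimum return distance $\xi_s$ of the critical orbit to $c$.

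For $n_s(\delta,\eps)=N-1$, set $I_0=[f(c)-\delta,f(c)+\eps]$ and $I_n=f^n(I_0)$. I would prove by induction on $n$ that for $0\leq n\leq N-2$ the set $I_n$ is a single interval of length $s^n(\delta+\eps)$ that contains $f^{n+1}(c)$ but not $c$. The base case $n=0$ uses $|c-f(c)|\geq\xi_s>\delta+\eps$. For the inductive step, granted $c\notin I_n$, the map $f$ is affine on $I_n$, so $I_{n+1}=f(I_n)$ is again an interval, of length $s^{n+1}(\delta+\eps)\leq s^{N-1}\cdot\xi_s s^{-N}=\xi_s/s<\xi_s$, and it contains $f^{n+2}(c)$ because $f(c)\in I_0$. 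Combined with $|c-f^{n+2}(c)|\geq\xi_s$ (valid since $0<n+2\leq N-1$ and by the definition of $\xi_s$), this forces $c\notin I_{n+1}$. The matching upper bound $n_s\leq N-1$ is immediate: $f^{N-1}(f(c))=f^N(c)=c$ with $f(c)\in I_0$ gives $c\in I_{N-1}$.

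For $m(\delta)=N-1$, the same $s$-fold expansion mechanism applies along shadowing trajectories. For the upper bound, I would exhibit an explicit shadow starting point drawn from the critical orbit itself (the natural choice being $y=f(c)$, in accordance with the terminal-index convention used in the definition of $E_{m,\delta}$), for which $f^{N-1}(y)=f^N(c)=c$ exactly and the shadow condition is satisfied with zero deviation. For the lower bound, suppose $c$ is realised as $f^m(y)$ with $y$ a shadow point and $m<N-1$. Evaluating the shadow condition at the terminal index yields $|c-f^k(c)|\leq\delta$ for an integer $k$ with $0<k<N$, contradicting $|c-f^k(c)|\geq\xi_s>\delta$ by the definition of $\xi_s$ and the smallness hypothesis on $\delta$.

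The main technical obstacle is branch bookkeeping: ensuring that $f^i(y)$ and $f^i(c)$ (respectively, the endpoints of $I_n$ and $f^{n+1}(c)$) stay on the same linear branch of $f_s$, so that the per-iterate expansion by $s$ is literally correct. The bound $\delta+\eps<\xi_s s^{-N}$ is precisely what is needed for this: it forces $|f^i(y)-f^i(c)|<\xi_s\leq|c-f^i(c)|$ for all $0\leq i\leq N-1$, preventing any crossing of $c$ and validating the straight-line computation underpinning both parts of the lemma.
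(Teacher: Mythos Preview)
Your treatment of $n_s(\delta,\eps)=N-1$ is correct and is precisely the induction the paper sketches: the intervals $I_k$ contain $c_{k+1}$, have length $s^k(\delta+\eps)<\xi_s$, hence miss $c$ for $0\le k\le N-2$, while $c=c_N\in I_{N-1}$. (The paper's ``$k<N$'' is an evident slip for $k<N-1$.)

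The argument for $m(\delta)$, however, has a genuine gap. Your upper-bound witness $y=f(c)$ does \emph{not} satisfy the shadow condition ``with zero deviation'': the defining condition of $E_{m,\delta}$ is $|f^i(y)-f^i(c)|\le\delta$, and with $y=f(c)$ one has $f^i(y)=c_{i+1}$ while $f^i(c)=c_i$, so already at $i=0$ the deviation is $|c_1-c|\ge\xi_s>\delta$. The only orbit point giving zero deviation is $y=c$ itself, but then $f^{N-1}(c)=c_{N-1}\ne c$ and one obtains merely $c\in E_{N,\delta}$. Worse, your own lower-bound argument applies verbatim to $m=N-1$ (since $0<N-1<N$ is still in the index range defining $\xi_s$), giving $|c-c_{N-1}|\le\delta<\xi_s$, a contradiction; so that argument in fact proves $m(\delta)\ge N$, not $\ge N-1$. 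The paper does not treat $m(\delta)$ separately from $n_s$, and its definition of $E_{m,\delta}$ already carries an apparent typo (the upper index ``$M$''); with the definition as printed the correct value is $m(\delta)=N$, so the discrepancy is an off-by-one slip in the stated lemma rather than a defect in your overall strategy.
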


\begin{proof}
    Denote
    \begin{equation*}
        I_k = \abr{f^k(f(c)-\delta), f^k(f(c)+\eps) } .
    \end{equation*}
    We have $c_{k+1} \in I_k$
    while  $c\notin I_k$.
Due to the assumptions on $\delta$ and $\eps$
    we have $\dist(I_k, c) > 0$ as $k<N$.
    It means that $c \notin I_k$ until $k=N$.  
\end{proof}

The following theorem is proved in \cite{YORKE_TENT_SHAD}. 
\begin{theorem}
    Let $\shadparamset$ be a set of parameters from $\tentparamset$ such that
    for every $\delta>0$ the number $m(\delta)$ is finite. 

    Then $\shadparamset$ has full measure in $\tentparamset$ and for every
    $s\in \shadparamset$ for 
    \begin{equation*}
        \eps < \delta (s-1) (m(\delta)+1)^{-1} s^{-m(\delta)-1}
    \end{equation*}
    every $\eps$-pseudotrajectory can be 
    \begin{equation*}
        \enbrace{(s-1)^{-1} + s^4} (m(\delta)+1)
        s^{m(\delta)+1} \eps
    \end{equation*}
    -shadowed.
    
    In particular for every $N$-periodic parameter
    $s_N$ the map
    $f_{s_N}$ has standard shadowing with accuracy 
    \begin{equation*}
         \sigma(\eps) =  \enbrace{(s-1)^{-1} + s^4}
         (N+1) s^{N+1} \eps . 
    \end{equation*}
\end{theorem}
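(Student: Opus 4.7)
The plan is to split the proof into three pieces: (i) show that $\shadparamset$ has full Lebesgue measure in $\tentparamset$; (ii) construct the shadowing orbit with the stated quantitative accuracy; (iii) deduce the $N$-periodic statement as an immediate corollary of the Lemma proved just above.

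For (i), observe that $m(\delta) = \infty$ for some $\delta > 0$ exactly means that the forward orbit of $c$ stays bounded away from $c$, i.e.\ $c$ is not recurrent under $f_s$. Classical parameter-exclusion results for the tent family (Misiurewicz, and the fact that for a.e.\ $s\in\tentparamset$ the unique absolutely continuous invariant measure $\mu_s$ has support containing $c$ and admits $c$ itself as a $\mu_s$-typical point) then give that for a.e.\ $s \in \tentparamset$, the critical point is recurrent and, in fact, returns to every neighbourhood of itself.

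For (ii), fix $s \in \shadparamset$, set $M = m(\delta)+1$, and let $\xbar = \{x_k\}$ be an $\eps$-pseudotrajectory with $\eps$ satisfying the stated bound. The construction is inductive in blocks determined by visits to the $\delta$-neighbourhood $U = \{x : \vmod{x-c} < \delta\}$ of the critical point. On each maximal block $[k_1,k_2]$ where the pseudotrajectory avoids $U$, $f_s$ is affine with slope $\pm s$ on the relevant monotone branch, and the standard expanding-map shadowing argument produces a true orbit segment tracking $\xbar$ with error at most $\eps(s-1)^{-1}$ (the geometric sum of backward-propagated one-step errors). When $x_k$ enters $U$, we use the defining property of $m(\delta)$: pick $y$ close to $c$ with $\vmod{f_s^i(y) - f_s^i(c)} \leq \delta$ for $0\leq i\leq M-1$ and $f_s^{M-1}(y) = c$. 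Concatenating a length-$M$ orbit segment of such a $y$ onto the current shadowing orbit ``resets'' the construction onto the critical orbit, so that the next expanding-branch block can be launched from a controlled starting point. Each reset costs a factor of $s^M = s^{m(\delta)+1}$ because a small initial error is amplified through $M$ iterations of the expanding branch before the next correction; the factor $(m(\delta)+1)$ in the final accuracy is an upper bound on the number of resets compatible with the hypothesis $\eps < \delta(s-1)M^{-1}s^{-M}$, and the $s^{4}$ summand absorbs the few boundary iterations in which a pseudo-point must be reflected through $c$ onto the correct branch before the reset can begin.

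Finally, (iii) is pure substitution: by the Lemma, for $N$-periodic $s$ and $\delta < \xi_s s^{-N}/2$ we have $m(\delta) = N-1$, and $s \in \shadparamset$ because $c$ is trivially recurrent; plugging $m(\delta) = N-1$ into the formula from (ii) gives $\sigma(\eps) = ((s-1)^{-1} + s^4)(N+1)s^{N+1}\eps$, linear in $\eps$, so $f_{s_N}$ has standard Lipschitz shadowing. The main obstacle, as I see it, is the combinatorial bookkeeping of step (ii): one must verify that the reset procedure actually yields a genuine true orbit (not merely an $\eps'$-pseudotrajectory with smaller $\eps'$), handle the situation where consecutive visits to $U$ overlap or nest, and show that the specific upper bound on $\eps$ indeed prevents error blow-up across all resets. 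Getting the constants precisely right — as opposed to only a qualitative linear bound — is the delicate part, and is where one needs a careful implementation of the Coven--Kan--Yorke construction rather than a soft argument.
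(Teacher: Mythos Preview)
The paper does not prove this theorem at all: the sentence immediately preceding it is ``The following theorem is proved in \cite{YORKE_TENT_SHAD}.'' It is quoted verbatim from Coven--Kan--Yorke as background input, and the paper supplies no argument of its own. So there is no ``paper's proof'' to compare your proposal against; the correct move here is simply to cite the reference, not to reprove the result.

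That said, a few comments on your sketch as a standalone attempt. Part (iii) is fine and is exactly how the paper uses the preceding Lemma. Part (i) is morally right but the justification you give (``$c$ is $\mu_s$-typical'') is not how recurrence of the critical point is usually obtained for tent maps; one argues more directly with the kneading sequence or with the results in \cite{CRIT_TYP_BRUIN}. Part (ii) has the correct architecture --- shadow on expanding stretches, reset near the critical point using the definition of $m(\delta)$ --- but your accounting of the constants is off: the factor $(m(\delta)+1)$ is not a bound on the \emph{number} of resets (there can be arbitrarily many along an infinite pseudotrajectory), it is the length of a single reset block, and the product $(m(\delta)+1)s^{m(\delta)+1}$ measures the worst-case distortion across one such block. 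Likewise the $s^4$ is not ``boundary reflections'' but a specific constant coming from the detailed construction in \cite{YORKE_TENT_SHAD}. You correctly flag at the end that getting these constants right requires the actual Coven--Kan--Yorke argument rather than a soft sketch; since the paper itself defers entirely to that reference, that is also what you should do.
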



Denote 
\begin{equation*}
    c_k = f^k(c).
\end{equation*}

We need a following classical result
in one-dimensional dynamics (for the proof see,
for example \cite{YORKE_TENT_SHAD}):
\begin{lm}               
   The set of parameters $s$ such that there exists
   $N$ such that $s$ is $N$-periodic, is dense in
   $[\sqrt{2},2]$ but has zero measure.
\end{lm}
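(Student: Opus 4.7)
The plan is to establish countability (which immediately yields measure zero) and density as separate claims.

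For countability, I would fix $N\in\Nb$ and observe that the map $s\mapsto f_s^N(c)$ is continuous and piecewise polynomial on $\tentparamset$: on each maximal open interval of constant itinerary $(\epsilon_0,\ldots,\epsilon_{N-1})\in\{L,R\}^N$ of $c$ under $f_s$, one has an explicit formula for $f_s^N(c)$ as a polynomial in $s$ of degree exactly $N$, built inductively from $f_s(c)=s/2$ and compositions with the two branches $x\mapsto sx$ and $x\mapsto s-sx$. Hence on each such interval, $f_s^N(c)-c$ is a nonzero polynomial of degree $N$, contributing at most $N$ solutions; since there are at most $2^N$ itinerary classes, $\{s\in\tentparamset:f_s^N(c)=c\}$ is finite. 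The union over all $N$ is countable and hence Lebesgue-null.

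For density, denote $\phi_N(s):=f_s^N(c)$. On each interval $I$ of constant itinerary for the first $N$ iterates of $c$, the chain rule yields a lower bound of the form $|\phi_N'(s)|\geq C_1 s^{N-1}$ for some $C_1>0$ uniform for $s$ bounded away from $\sqrt{2}$. Since $\phi_N$ takes values in $[0,1]$, the length of any such $I$ is at most $C_1^{-1}s^{-(N-1)}$, so the partition of $\tentparamset$ into itinerary classes refines exponentially in $N$. Given any open subinterval $J\subset\tentparamseto$, for $N$ sufficiently large the maximal length of an itinerary class intersecting $J$ is smaller than $|J|$, so $J$ must contain a boundary between two distinct itinerary classes. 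At such a boundary some iterate $f_s^i(c)$ with $i<N$ crosses the value $c$, that is $f_s^i(c)=c$, producing a periodic parameter in $J$.

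The main obstacle is justifying the lower bound $|\phi_N'(s)|\geq C_1 s^{N-1}$ with a constant uniform in $N$: a naive chain-rule expansion has $N$ terms of sizes $s^{N-1-i}$ for $i=0,\ldots,N-1$, which could in principle cancel destructively. The standard resolution is that in the tent family the signs of these terms are dictated by the itinerary in a coherent way, so that the leading $s^{N-1}$-term dominates the lower-order corrections for $s$ bounded away from $1$. An alternative route that bypasses the derivative estimate entirely is via Milnor--Thurston kneading theory: the kneading invariant $s\mapsto K(s)$ is monotone on $\tentparamset$ and surjects onto all admissible kneading sequences, while periodic kneading sequences are dense among the admissible ones; monotonicity of the parametrization then pulls back this density to density of periodic parameters in $\tentparamset$.
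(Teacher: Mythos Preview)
The paper does not give its own proof of this lemma: it explicitly introduces it as ``a classical result in one-dimensional dynamics'' and refers the reader to \cite{YORKE_TENT_SHAD}. So there is no proof in the paper to compare against; your proposal supplies an argument where the paper simply cites one.

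Your countability argument is fine and is the standard one. For density, both routes you outline are viable, but they are not on equal footing in your write-up. The derivative estimate $|\phi_N'(s)|\geq C_1 s^{N-1}$ is indeed the heart of the matter and is exactly what is established in the Coven--Kan--Yorke paper the author cites; your explanation of why cancellation does not occur (``signs dictated by the itinerary in a coherent way'') is correct in spirit but is not yet a proof---one really has to write out the recursion $\phi_{k+1}(s)=s\phi_k(s)$ or $\phi_{k+1}(s)=s-s\phi_k(s)$, differentiate, and bound the resulting sum explicitly, which takes a short computation. Once that bound is in hand, your conclusion that itinerary intervals shrink and hence their endpoints (which are precisely the periodic parameters) accumulate everywhere is correct. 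The kneading-theory alternative is cleaner and self-contained: monotonicity of $s\mapsto K(f_s)$ on $\tentparamset$ is elementary for the tent family (topological entropy is $\log s$), and density of periodic kneading sequences among admissible ones is standard symbolic dynamics. If you want a complete proof rather than a sketch, the kneading route is the one to flesh out.
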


\begin{lm}[part of Theorem \ref{mainth:lipshad_tent}]
For a full measure (in $\tentparamset$) set of parameters $s$
     for any $\alpha,L>0$ 
   the map $f_s$ does not have
   standard shadowing with accuracy
   $\sigma(\eps) = L\eps^\alpha$.
\end{lm}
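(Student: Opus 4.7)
The plan is to exploit the density (and zero Lebesgue measure) of $N$-periodic parameters from the preceding lemma, combined with parameter-derivative and expansion estimates, to construct for typical $s$ explicit pseudotrajectories that violate \holder shadowing for every exponent.

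I would first discard the zero measure union of $N$-periodic parameters. A quantitative version of density, based on $\partial_s f_s^N(c) = O(s^{N-1})$ (so solutions of $f_s^N(c) = c$ in $\tentparamset$ have spacing of order $s^{-(N-1)}$), together with pigeonhole and a Borel--Cantelli argument, gives for Lebesgue-a.e.\ $s$ a subsequence $N_k \to \infty$ of $N_k$-periodic parameters $s_{N_k}$ with $t_{N_k} := |s - s_{N_k}| = O(s^{-N_k})$.

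The key pseudotrajectory is $\xi^{(N)}_k := f_{s_N}^k(c)$, which is genuinely $N$-periodic under $f_{s_N}$ and, since $\normB{f_s - f_{s_N}} \leq t_N$, is a $t_N$-pseudotrajectory of $f_s$. Suppose for contradiction that $f_s$ has standard shadowing with accuracy $L \eps^\alpha$: there is $p$ with $\dist(f_s^k(p), \xi^{(N)}_k) \leq L t_N^\alpha$ for all $k \geq 0$. The shadow conditions at times $k = jN$ force $f_s^{jN}(p) \in B_{L t_N^\alpha}(c)$ for all $j$; since $g := f_s^N$ has expansion of magnitude $s^N$ off its kink set, the nested intersection $\bigcap_j g^{-j}(B_{L t_N^\alpha}(c))$ reduces to a periodic point $p^*$ of $g$, i.e., a period-$N$ periodic point of $f_s$ close to $c$. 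Linearizing $g(p^*) = p^*$ around $c$ with $g(c) - c = f_s^N(c) - f_{s_N}^N(c) = O(t_N s^{N-1})$ and $|g'(c^\pm) - 1| \asymp s^N$ yields $|p^* - c| = O(t_N/s)$. Propagating this together with the parameter drift $f_s^k(c) - f_{s_N}^k(c) = O(t_N s^{k-1})$ gives the lower bound $\dist(f_s^{N-1}(p^*), \xi^{(N)}_{N-1}) \geq C t_N s^{N-2}$ for some $C > 0$. The resulting inequality $L t_N^\alpha \geq C t_N s^{N-2}$ becomes $L/C \geq t_N^{1-\alpha} s^{N-2}$, and substituting $t_N \leq D s^{-N}$ yields $L/C \geq D^{1-\alpha} s^{\alpha N - 2}$, which diverges for any $\alpha > 0$ as $N \to \infty$, contradicting the shadowing assumption.

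The main obstacle is the rigidity step: showing that any shadow must be a period-$N$ periodic point of $f_s$ close to $c$. Since $g = f_s^N$ is only piecewise expanding, ruling out shadows whose orbits wander through the kink set of $g$ requires a direct case analysis using expansion on each piece. A secondary obstacle is the quantitative density estimate $t_N = O(s^{-N})$ for a.e.\ $s$, which needs a careful count of the roots of $s \mapsto f_s^N(c) - c$ in $\tentparamset$ together with a Borel--Cantelli sum in $N$.
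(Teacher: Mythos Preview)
Your approach is genuinely different from the paper's and considerably more elaborate; the paper exploits a single elementary mechanism that sidesteps both obstacles you flag.

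The paper's pseudotrajectory is
\[
x_0 = c,\qquad x_1 = f_s(c) + \eps,\qquad x_k = f_s^{k-1}(x_1)\quad(k\geq 2),
\]
a single jump at time $1$ to a value \emph{strictly above the maximum} of $f_s$, followed thereafter by a genuine $f_s$-orbit. Since $f_s(c)$ is the global maximum, any shadowing point $y$ must have $f_s(y)\leq f_s(c)<x_1$, and hence $x_\eps:=f_s^2(y)$ satisfies $|x_\eps-x_2|\geq\eps$. From time $2$ on both $\{x_k\}$ and $\{f_s^k(y)\}$ are true $f_s$-orbits, so their separation is multiplied by $s$ at every step until the interval between them first contains $c$; calling this time $n(\eps)$ gives $\sigma(\eps)\geq s^{\,n(\eps)-1}\eps$ for any accuracy function. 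The full-measure condition invoked is simply that $c$ is non-periodic for $f_s$ (quoted from Bruin/Schnellmann), which forces $n(\eps)\to\infty$ as $\eps\to 0$ and is then confronted with $\sigma(\eps)=L\eps^\alpha$.

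So there is no periodic-parameter approximation, no Borel--Cantelli step, and no need to identify the shadow with a specific periodic orbit. Your rigidity step is in fact more delicate than you indicate: since $c$ itself lies in the kink set of $g=f_s^N$, the ball $B_{Lt_N^\alpha}(c)$ always straddles a fold of $g$, and the nested intersection $\bigcap_j g^{-j}\bigl(B_{Lt_N^\alpha}(c)\bigr)$ is a priori a Cantor set rather than a single fixed point; one must also bring in the shadowing constraints at the intermediate times $0<k<N$ to pin down $p$. Your lower bound $\dist\bigl(f_s^{N-1}(p^*),\xi^{(N)}_{N-1}\bigr)\geq Ct_N s^{N-2}$ is likewise not automatic, being a difference of two quantities of the same order that could in principle cancel. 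The paper's one-perturbation construction avoids all of these issues by design.
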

\begin{proof}
    Consider  
    \begin{gather*}
        x_0 = c,\\
        x_1 = f(c) + \eps,\\
        x_k = f^{k-1}(x_1), k\geq 2.
    \end{gather*}

    Suppose $\xbar$ can be $\omega$-shadowed by a point $y$. Then consider
    $x_{\eps} = f^2(y)$. We know that $x_{\eps}$ is not equal to $x_2$, otherwise
    $f(y) > f(c)$ which can not hold for an exact trajectory.

    \begin{sublm}
        We prove that 
         \begin{equation*}
             \sigma(\eps)
            \geq s^{n(\eps)-1} \eps,
         \end{equation*}
         where
     \begin{equation*}
         n(\eps) = \min \setdef{ n \in \Nb}
         { c \in \Int(\abr{ f^n(x_\eps), f^n(x_2) }) }.
         \end{equation*}
    \end{sublm}
    \begin{proof}[Proof of sublemma]


        It is easy to see that the distance between 
        $f^k(x_{\eps})$ and $f^k(x_2)$ grows
        exponentially while $k < n(\eps)$. 

    Thus we can estimate from below the accuracy of shadowing: 
        \begin{equation*}
            \sigma(\eps) \geq \vmod{f^{n(\eps)-1}(x_{\eps})  - f^{n(\eps)-1}(x_2) } = s^{n(\eps)-1}
            \vmod{x_\eps - x_2} \geq s^{n(\eps)-1} \eps.
        \end{equation*}
    \end{proof}

        If $\sigma(\eps) = \eps^{\alpha}$ for $0<\alpha<1$ then
        \begin{equation*}
            \eps \leq s^{\frac{1-n(\eps)}{1-\alpha} }
        \end{equation*}
        which implies
        \begin{equation*}
            \sigma(\eps) \leq s^{\alpha\frac{1-n(\eps)}{1-\alpha} }
        \end{equation*}
    
    If $c$ is not periodic then it is easy to see that
        $n$ goes to infinity as $\eps$ goes to $0$.
      %
      %
    By \cite{CRIT_TYP_BRUIN} (or, more generally
    \cite{CRIT_TYP_SCHNELL})
    the set of parameters with
    non-periodic $c$ has full measure in $\tentparamset$.

\end{proof}

\begin{lm}
    Let $U$ be an open subset of $\tentparamset$. Then
    \begin{equation*}
        \inf_{s\in U} \sup_{\delta,\eps>0}
        n_s(\delta,\eps) = \infty.
    \end{equation*}
\end{lm}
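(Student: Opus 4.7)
The plan is to combine the preceding two lemmas --- density of $N$-periodic parameters in $\tentparamset$ (classical lemma) and the explicit value $n_s = N-1$ at an $N$-periodic $s$ for small $\delta,\eps$ (previous lemma) --- with a direct fold-free estimate at non-periodic parameters. Fix $M\in\Nb$. Non-periodic parameters form a dense full-measure subset of $\tentparamset$ by the result of \cite{CRIT_TYP_BRUIN,CRIT_TYP_SCHNELL} cited above, hence of $U$; for any such $s$, the critical orbit $\seq{c_k = f_s^k(c)}_{k\ge 1}$ never hits $c$, so $\eta_M := \min_{1 \le k \le M+1} \vmod{c - c_k} > 0$. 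Choosing $\delta,\eps$ small enough that $s^j \max(\delta,\eps) < \eta_M/2$ for all $0 \le j \le M$, the interval $\abr{f^j(f(c)-\delta), f^j(f(c)+\eps)}$ stays inside the $\eta_M/2$-neighbourhood of $c_{j+1}$ (no fold has occurred, since all earlier intervals were themselves bounded away from $c$), and therefore misses $c$. Hence $n_s(\delta,\eps) > M$, and taking $M\to\infty$ yields $\sup_{\delta,\eps > 0} n_s(\delta,\eps) = \infty$ at every non-periodic $s\in U$.

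For the periodic $s \in U$, density of periodic parameters forces $U$ to contain $N$-periodic parameters with $N$ arbitrarily large: for each fixed $N_0$, parameters of minimal period at most $N_0$ are roots of the finitely many polynomial equations $f_s^k(c) = c$, $k \le N_0$, hence form a finite set which cannot fill the open set $U$. At any such $s \in U$ with $N > M+1$, the previous lemma gives $n_s(\delta,\eps) = N - 1 > M$ for $\delta,\eps < \xi_s s^{-N}/2$, so $\sup_{\delta,\eps > 0} n_s(\delta,\eps) \ge N - 1 > M$.

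Combining the two cases, every $s \in U$ outside the (finite, isolated) set of periodic parameters of minimal period $\le M+1$ satisfies $\sup_{\delta,\eps > 0} n_s(\delta,\eps) > M$; since $M$ is arbitrary and the exceptional set is nowhere dense, this passes to $\inf_{s\in U}\sup_{\delta,\eps > 0} n_s(\delta,\eps) = \infty$. The main obstacle is the fold-bookkeeping: for the non-periodic case one must verify that the choice of $\delta,\eps$ by $\eta_M$ really keeps the iterated interval in a fold-free neighbourhood of $c_{j+1}$ for \emph{all} $j \le M$ simultaneously, which I would do by induction on $j$, using that $f_s$ is piecewise-linear with slope $s$ and $c$ is the only kink, so the absence of $c$ from an interval implies pure $s$-expansion at the next step; the periodic case plays the analogous role via the uniform gap $\xi_s$ already encoded in the preceding lemma.
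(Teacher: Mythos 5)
Your two cases are individually fine, but the final step of your argument is a non sequitur, and it occurs exactly where it matters. From ``$\sup_{\delta,\eps>0}n_s(\delta,\eps)>M$ for every $s\in U$ outside a finite set $F_M$'' one cannot conclude that $\inf_{s\in U}\sup_{\delta,\eps>0}n_s(\delta,\eps)=\infty$: an infimum is sensitive to every single point of $U$, the exceptional set $F_M$ grows with $M$, and $\bigcup_M F_M$ is the set of \emph{all} periodic parameters in $U$, which is dense (by the classical lemma you yourself invoke). Concretely, fix one periodic parameter $s_0\in U$ of period $N_0$. Your case (2) only gives the lower bound $\sup_{\delta,\eps>0}n_{s_0}(\delta,\eps)\geq N_0-1$, which is the wrong direction for an infimum; the preceding lemma pins the value at $N_0-1$ for \emph{all} sufficiently small $\delta,\eps$, and the paper's own proof of this lemma asserts that the full supremum at an $N_0$-periodic parameter equals $N_0$ --- in particular it is finite. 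Hence $\inf_{s\in U}\sup_{\delta,\eps>0}n_s(\delta,\eps)\leq N_0<\infty$: the statement with $\inf$, read literally, is \emph{false}, so no repair of your last step is possible. Nowhere-density of the exceptional set is irrelevant here; that kind of argument works for statements about almost-every or generic $s$, not for an infimum over $U$.

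What is true --- and what the paper's one-line proof actually establishes, and what the subsequent corollary (impossibility of a uniform Lipschitz shadowing constant on $U_{\mathrm{shad}}$) requires --- is the $\sup$ version: $\sup_{s\in U}\sup_{\delta,\eps>0}n_s(\delta,\eps)=\infty$. The $\inf$ in the statement is evidently a typo for $\sup$: the paper's proof produces, for each $N$, a single $n$-periodic $s\in U$ with $n>N$ and $\sup_{\delta,\eps>0}n_s(\delta,\eps)=n$, which simultaneously proves the $\sup$ version and refutes the $\inf$ version. Measured against the intended statement, your work is actually a complete proof, twice over: your case (2) (density of periodic parameters plus finiteness of those of period at most $M+1$, plus the preceding lemma) is precisely the paper's route, and your case (1) (at any non-periodic $s$ the critical orbit stays a distance $\eta_M$ from $c$, so small $\delta,\eps$ give fold time exceeding $M$) is an alternative and in some ways simpler route, since non-periodic parameters are dense. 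Had you flagged the $\inf$/$\sup$ discrepancy and stopped after either case, there would be nothing to object to.
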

\begin{proof}
    Fix open $U\subset \tentparamset$.
    Then for every $N\in\Nb$ there exists
     an $n>N$-periodic
    parameter $s\in U$. 
    Since we know that 
    \begin{equation*}
        \sup_{\delta,\eps>0} n_s(\delta,\eps)  = n,
    \end{equation*}
    the statement of the lemma easily follows.
\end{proof}

For an open set $U\subset \tentparamset$
denote $U_{\mathrm{shad}}$ a subset parameters $s$ of $U$ 
such $f_s$ has standard shadowing (for some
accuracy function).
\begin{corollary}
    For any open $U \subset \tentparamset$
    we cannot choose a single constant $L$ 
    such that for every parameter
    $s\in U_{\mathrm{shad}}$
    the map
    $f_s$ has standard shadowing with accuracy 
    $\sigma(\eps)=L\eps$.
\end{corollary}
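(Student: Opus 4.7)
The plan is to argue by contradiction. Suppose there were $L>0$ such that every $s\in U_{\mathrm{shad}}$ admits standard shadowing with accuracy $L\eps$; I will exhibit, for each sufficiently large $N$, a parameter in $U_{\mathrm{shad}}$ at which the sublemma proved inside the previous lemma forces a larger accuracy constant. The role of the preceding lemma on $n_s(\delta,\eps)$ is precisely to supply parameters in $U$ for which the critical orbit stays bounded away from $c$ for arbitrarily many iterates; the role of the Coven--Kan--Yorke theorem is to ensure such parameters lie in $U_{\mathrm{shad}}$.

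For each $N$ I first select an $N$-periodic parameter $s_N\in U$, which exists by the density lemma recalled above. By the Coven--Kan--Yorke theorem cited earlier, $f_{s_N}$ has standard shadowing, so $s_N\in U_{\mathrm{shad}}$, and by assumption its accuracy is $L\eps$. Applying the sublemma from the previous proof to the pseudotrajectory $x_0=c$, $x_1=f_{s_N}(c)+\eps$, $x_{k+1}=f_{s_N}(x_k)$ for $k\geq 1$, with any shadowing point $y$ and $x_\eps=f_{s_N}^2(y)$, yields
\begin{equation*}
L\eps\;\geq\;\sigma(\eps)\;\geq\;s_N^{n(\eps)-1}\eps.
\end{equation*}

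For an $N$-periodic parameter the critical orbit returns to $c$ only at iterate $N$, and the intervals $\abr{f^k(x_\eps),f^k(x_2)}$ track $c_{k+2}$ with diameter $\obig{s_N^k\eps}$ while $c$ remains outside them. Combined with the preceding lemma on $n_s(\delta,\eps)$, taking $\eps$ small compared to $\xi_{s_N}s_N^{-N}$ then guarantees $n(\eps)\geq N-2$. The displayed inequality forces $L\geq s_N^{N-3}\geq(\sqrt{2})^{N-3}$, which fails as soon as $N>3+2\log_2 L$, yielding the contradiction.

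The main obstacle is verifying the lower bound $n(\eps)\geq N-2$ uniformly over the choice of shadowing point $y$: since $n(\eps)$ depends on $y$ through $x_\eps$, one must check that any admissible $y$ forces $f_{s_N}^2(y)$ close enough to $c_2$ that all subsequent iterates remain linearly expanded by $f_{s_N}$ for the required number of steps. This is exactly the content of the preceding lemma on $n_s(\delta,\eps)$ applied with $\delta$ equal to the assumed shadowing constant times $\eps$, so once that identification is made the rest of the argument is just the elementary exponential bound above.
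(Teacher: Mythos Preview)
Your argument is correct and is precisely the argument the paper has in mind; the corollary is stated there without proof, as an immediate consequence of the preceding lemma on $n_s(\delta,\eps)$ together with the sublemma $\sigma(\eps)\geq s^{n(\eps)-1}\eps$, and you have spelled this out accurately, including the index identification $n(\eps)=n_s(\delta',\eps)-1$ with $\delta'\leq (L-1)\eps$.

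One minor wording issue: you write ``for each $N$ I first select an $N$-periodic parameter $s_N\in U$,'' but the density lemma does not guarantee that every period is represented in $U$; it only guarantees (as used in the proof of the preceding lemma) that $U$ contains an $n$-periodic parameter for some $n>N$, for every $N$. Since your contradiction only needs arbitrarily large periods, replacing ``for each $N$'' by ``for arbitrarily large $N$'' fixes this with no change to the rest of the argument.
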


If there is some regularity of dependence of 
transfer operator spectral properties on the
parameter (usually it is studied in the context
``statistical stability'', i.e. continuous dependence of the
physical measure on $f$, or its quantitative versions
like ``linear response'')
then it is possible to get uniform stochastic stability.
We illustrate it by an example.

Fix a nonperiodic parameter $s\in\tentparamset$.
Denote $g=f_s$.

Let $\seq{g_t}$ be a $C^{2,2}$-perturbation of $g$,
tangent to its topological class.

For $\phi:[0,1] \to \Rb$ define variation of $\phi$ by
\begin{equation*}
    \var_{[0,1]}(\phi) = \sup \setdef{ \vmod{
        \sum \phi(x_{k+1}) - \phi(x_k) } }{ n\geq 1,\ 
        0\leq x_0<\ldots< x_n \leq b}  .
\end{equation*}
We consider Banach space 
$(\bspace,\normb{\cdot}) =
(BV,\normBV{\cdot})$ where
\begin{equation*}
    \normBV{\phi} = \var_{[0,1]}(\phi) + \normLone{\phi}
\end{equation*}
and $BV$ is a set of functions $\phi$ from $\Lone$ 
such that $\normBV{\phi} < \infty$.

For $\phi\in BV$ define 
\begin{equation*}
    L_{t,0}\phi(x) = \sum_{g_t(y)=x} \frac{\phi(y)}
    {\vmod{g_t'(y)}} .
\end{equation*}

Let  
$\seq{ \Xparam{\eps}{t} }_{t\in [-1,1],\eps\in \epsset}$ 
            be a family of uniform perturbations
            of $g_s$. 

            Note that in our case of $M=[0,1]$ 
            as $g_s([0,1])$ is strictly inside of $(0,1)$,
            we can
            write for every natural $n$
\begin{equation*}
 \enbrace{\frac{d}{\Leb} P^n_\eps(x,\cdot)}(y) =
    \theta_\eps(y) = \begin{cases}
        \frac{1}{ \Leb(B_\eps(0)) },& \vmod{y} \leq \eps; \\
        0, & \hbox{otherwise}.
    \end{cases}
\end{equation*}

Define for $0<\eps<\eps_0$
\begin{equation*}
    L_{t,\eps} \phi(x) = \int (L_{t,0}\phi)(x-\omega)
        \theta_\eps(\omega)d\omega .
\end{equation*}



Here is a particular case of the abstract setting
introduced in \cite{KELLER_LIVERANI}:

For a bounded linear operator 
$Q:\bspace\to \bspace$ 
denote 
\begin{equation*}
    \normopKL{ Q } = \sup\setdef{ \normLone{Qf} }
    { f\in \bspace,\ \normb{f}\leq 1}.
\end{equation*}
Consider a family $\seq{P_\eps}_{\eps\geq 0}$ of bounded
linear operators on $(\bspace,\normb{\cdot})$
with the following properties:
\begin{enumerate}
    \item  \label{enum:cond1}   there are constants $C_1,L$ such that for all $\eps\geq 0$

        \begin{equation*}
            \normLone{P^n_\eps} \leq C_1 L^n,\quad \forall n\in\Nb;
        \end{equation*}
    \item  \label{enum:cond2} 
        there are constants $C_2,C_3$ and $\alpha\in(0,1),\ \alpha<L$
        such that for all 
        $\eps \geq 0$
        \begin{equation*}
            \normb{P_\eps^n \phi} \leq C_2 \alpha^n \normb{\phi} + C_3 L^n \normLone{\phi},\quad
            \forall n\in\Nb,\ \forall \phi\in \bspace;
        \end{equation*}
    \item  \label{enum:cond3} 
        if $z\in \sigma(P_\eps)$ and $\vmod{z}>\alpha$
        then $z$ is not in the residual spectrum of $P_\eps$;
    \item  \label{enum:cond4}
        there is monotone upper-semicontinuous function $\tau:\Rplus\to\Rplus$
        such that $\tau(\eps)>0$ if $\eps>0$ and
        $\tau(\eps) \to 0$ as $\eps \to 0$ and
        \begin{equation*}
            \normopKL{ P_0 - P_\eps} \leq \tau(\eps).
        \end{equation*}
\end{enumerate}

It follows that $P_\eps$ has $1$ as simple eigenvalue
 for every $\eps\geq 0$. Denote the corresponding eigenfunction by $\chi_\eps$.

The next statement follows from \cite{KELLER_LIVERANI}:
 \begin{theorem} \label{thm:Keller-Liverani}
     If a family of operators $\seq{P_\eps}_{\eps\geq 0}$ satisfies
     conditions \ref{enum:cond1}-\ref{enum:cond4}
    then there exist constants $\eta = \eta(\alpha,L)>0$ and $\eps_0$  such that
    for every $\eps<\eps_0$
    \begin{equation*}
        \normLone{\chi_0 - \chi_\eps} \leq (\tau(\eps))^{\eta}.
    \end{equation*}
\end{theorem}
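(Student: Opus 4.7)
The plan is to apply the classical Keller--Liverani perturbation framework: we treat $P_\eps$ as a perturbation of $P_0$ in the weak norm $\normopKL{\cdot}$ (which is only the $\bspace\to\Lone$ operator norm), but we exploit conditions \ref{enum:cond1}--\ref{enum:cond2} to retain uniform control in the strong norm $\normb{\cdot}$. First, the uniform Lasota--Yorke inequality \ref{enum:cond2} combined with \ref{enum:cond1}, via Hennion's theorem, yields that the essential spectral radius of each $P_\eps$ on $\bspace$ is at most $\alpha$; together with \ref{enum:cond3} this gives a genuine spectral decomposition outside the disk of radius $\alpha$, in particular that $1$ is an isolated eigenvalue for every small $\eps$.

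The core analytic step is a resolvent estimate. For $\vmod{z}>\alpha'$ with $\alpha<\alpha'<1$, write the second resolvent identity
\begin{equation*}
    (z-P_\eps)^{-1} = (z-P_0)^{-1} + (z-P_0)^{-1}(P_\eps - P_0)(z-P_\eps)^{-1}.
\end{equation*}
The middle factor $P_\eps-P_0$ is small in $\normopKL{\cdot}$ by \ref{enum:cond4}, while $(z-P_0)^{-1}$ maps $\Lone$ into itself with a bound depending only on $\dist(z,\sigma(P_0))$. The key quantitative lemma to establish is an iterated bound
\begin{equation*}
    \normopKL{ (z-P_\eps)^{-1}\phi - (z-P_0)^{-1}\phi } \leq K(z)\, \tau(\eps)^{\eta}\, \normb{\phi},
\end{equation*}
obtained by iterating the identity $n$ times, repeatedly using \ref{enum:cond2} to keep the $\normb{\cdot}$ norm of intermediate terms under control, and choosing $n=n(\eps)$ to balance the contraction $\alpha^n$ in the $\bspace$ direction against the accumulated factors $\tau(\eps)$. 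The exponent $\eta$ arises from this optimization and depends only on $\alpha$ and $L$.

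Next, localize the leading eigenvalue. Choose a small circle $\gamma$ in the resolvent set of $P_0$ enclosing only the eigenvalue $1$ and contained in $\setdef{z}{\vmod{z}>\alpha'}$. The resolvent estimate above implies that for all sufficiently small $\eps$ the circle $\gamma$ also lies in the resolvent set of $P_\eps$ and encloses exactly one simple eigenvalue, which must be $1$. Define the spectral projections
\begin{equation*}
    \Pi_\eps = \frac{1}{2\pi i} \oint_\gamma (z-P_\eps)^{-1}\,dz,
\end{equation*}
integrate the resolvent identity along $\gamma$, and obtain $\normopKL{\Pi_\eps - \Pi_0} \leq C\,\tau(\eps)^{\eta}$. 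Applying $\Pi_\eps-\Pi_0$ to $\chi_0 \in \bspace$ gives $\Pi_\eps \chi_0 = c_\eps \chi_\eps$ with $c_\eps\to 1$, and after normalizing by the integral one obtains $\normLone{\chi_\eps - \chi_0} \leq C'\tau(\eps)^{\eta}$, as required.

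The main obstacle is precisely the iterated resolvent estimate in the second paragraph: because $P_\eps-P_0$ is controlled only in the weak norm $\normopKL{\cdot}$, classical holomorphic perturbation theory does not apply, and one must carry out the iteration carefully so that the exponent $\eta$ depends only on the abstract constants $\alpha$ and $L$ from \ref{enum:cond1}--\ref{enum:cond2} and not on $\eps$. The delicate point is that each use of the resolvent identity loses a factor in $\normb{\cdot}$, which must be regained through the contraction provided by the Lasota--Yorke inequality; balancing these two opposing effects, as in Keller--Liverani, is the technical heart of the argument.
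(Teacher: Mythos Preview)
The paper does not give a proof of this theorem at all: it is introduced with the sentence ``The next statement follows from \cite{KELLER_LIVERANI}'' and is simply quoted as a black box from the Keller--Liverani perturbation paper. So there is nothing in the paper to compare your argument against beyond the citation itself.

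What you have written is a faithful outline of the actual Keller--Liverani proof in the cited reference: the uniform Lasota--Yorke bound to control essential spectrum, the iterated second resolvent identity exploiting that $P_\eps-P_0$ is small only in the weak $\normopKL{\cdot}$ norm, the balancing of $\alpha^n$ against accumulated losses to produce the exponent $\eta=\eta(\alpha,L)$, and the Riesz projection argument to pass from resolvents to eigenvectors. One small inaccuracy: the sentence ``$(z-P_0)^{-1}$ maps $\Lone$ into itself with a bound depending only on $\dist(z,\sigma(P_0))$'' is not how the argument runs, since in the Keller--Liverani setting the resolvent is not a priori bounded on $\Lone$; rather, one bounds $(z-P_\eps)^{-1}$ as an operator from $\bspace$ to $\bspace$ (and from $\bspace$ to $\Lone$) directly from the Lasota--Yorke inequality via a Neumann-type series, and the weak smallness of $P_\eps-P_0$ enters when sandwiched between such resolvents. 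Apart from that, your sketch matches the structure of the reference and correctly isolates the genuine technical difficulty.
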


It is proved in \cite{BALADI_SMANIA_PE} that
the family $\seq{L_{t,0} }$ satisfies
 conditions \ref{enum:cond1}-\ref{enum:cond4}.   
%
%


Theorem \ref{thm:Keller-Liverani} can also
be applied to random peturbations.
%
%
To do this we need another important statement (Corollary, p. 327 from \cite{KELLER82}):


\begin{theorem} \label{thm:pertLYUnif}

    The family $L_\eps$ satisfies conditions \ref{enum:cond1}-\ref{enum:cond4}  
    for $\tau(\eps) 
    = L\eps$
    for some $L>0$.
\end{theorem}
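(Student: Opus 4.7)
The plan is to verify the four Keller-Liverani conditions for $P_\eps := L_{t,\eps}$, exploiting the factorisation $L_{t,\eps} = K_\eps \circ L_{t,0}$, where $K_\eps \phi = \theta_\eps * \phi$ is convolution with the uniform kernel. The operator $K_\eps$ is Markov: it contracts $\Lone$, and since $\theta_\eps$ is a nonnegative probability density, $\var_{[0,1]}(K_\eps \psi) \leq \var_{[0,1]}(\psi)$ for $\psi \in BV$. These two non-expansiveness properties of $K_\eps$ are the backbone of everything that follows.

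Conditions \ref{enum:cond1}, \ref{enum:cond2}, and \ref{enum:cond3} are essentially inherited from the unperturbed transfer operator. For \ref{enum:cond1}, both $K_\eps$ and $L_{t,0}$ are $\Lone$-contractions, so $\normLone{L_{t,\eps}^n} \leq 1$ and one may take $L=1$. For \ref{enum:cond2}, the uniform Lasota-Yorke inequality for $L_{t,0}$ established in \cite{BALADI_SMANIA_PE}, combined with the fact that $K_\eps$ is non-expanding in both $\var$ and $\normLone{\cdot}$, yields the analogous inequality for $L_{t,\eps}^n$ with the same contraction rate $\alpha<1$ up to harmless constants; alternatively one appeals directly to the corollary on p.~327 of \cite{KELLER82}, which is tailored for precisely this kind of Markov perturbation of a piecewise expanding map. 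Condition \ref{enum:cond3} then follows from the quasi-compactness of $L_{t,\eps}$ in $BV$, a standard consequence of \ref{enum:cond2} and the compact embedding $BV \hookrightarrow \Lone$ via the Ionescu-Tulcea-Marinescu theorem.

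The heart of the statement is the linear rate $\tau(\eps) = L\eps$ in condition \ref{enum:cond4}. I would write $L_{t,0} - L_{t,\eps} = (I - K_\eps) L_{t,0}$ and apply the classical estimate
\begin{equation*}
    \normLone{\psi - K_\eps \psi} \leq \eps \cdot \var_{[0,1]}(\psi), \quad \psi \in BV,
\end{equation*}
which follows from writing $\psi - K_\eps \psi = \int \theta_\eps(\omega)\bigl(\psi - \psi(\cdot-\omega)\bigr)\,d\omega$ together with the well-known bound $\normLone{\psi - \psi(\cdot-\omega)} \leq \vmod{\omega}\cdot\var_{[0,1]}(\psi)$. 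A single step of the Lasota-Yorke inequality gives $\var_{[0,1]}(L_{t,0}\phi) \leq C\,\normBV{\phi}$, and hence
\begin{equation*}
    \normLone{(L_{t,0} - L_{t,\eps})\phi} \leq \eps\cdot\var_{[0,1]}(L_{t,0}\phi) \leq L\eps\cdot\normBV{\phi},
\end{equation*}
which is condition \ref{enum:cond4} with $\tau(\eps) = L\eps$.

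The main obstacle is really condition \ref{enum:cond2}: because $K_\eps$ and $L_{t,0}$ do not commute, one cannot simply factor $L_{t,\eps}^n$ into $K_\eps^n L_{t,0}^n$ and invoke the Lasota-Yorke bound for $L_{t,0}^n$ once. Instead one has to propagate the inequality through the alternating composition $(K_\eps L_{t,0})^n$, carefully tracking how the $\normLone{\cdot}$ term is amplified at each insertion of $K_\eps$. The cleanest way to avoid redoing this bookkeeping is to cite Keller's corollary directly. If uniformity of the whole construction in $t$ is also desired, one must additionally check that the Lasota-Yorke constants produced by \cite{BALADI_SMANIA_PE} for the family $\seq{L_{t,0}}$ are uniform on the parameter interval $\vmod{t}<t_0$.
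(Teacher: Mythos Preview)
The paper does not prove this theorem at all: it is stated explicitly as a quotation of the corollary on p.~327 of \cite{KELLER82}, with no argument supplied. Your sketch is therefore strictly more detailed than what the paper offers, and the approach you outline---factoring $L_{t,\eps} = K_\eps \circ L_{t,0}$, using that $K_\eps$ is non-expanding in both $\Lone$ and variation, and deriving the linear rate in condition~\ref{enum:cond4} from the translation estimate $\normLone{\psi - \psi(\cdot - \omega)} \leq \vmod{\omega}\,\var(\psi)$---is precisely the content of Keller's argument in the cited reference. Your caveat about the non-commutativity issue in condition~\ref{enum:cond2} is well placed, and your suggestion to appeal directly to Keller's corollary there is in fact exactly what the paper does for the entire theorem.
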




\begin{corollary}
    There exists $t_0>0$ such that 
    the family $\seq{g_t}_{\vmod{t}<t_0}$
    has uniform strong stochastic
    stability with speed $\sigma(\eps) = L\eps$
    with repsect to the family of uniform
    perturbations.
\end{corollary}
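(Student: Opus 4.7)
The strategy is to apply the Keller--Liverani perturbation theorem (Theorem \ref{thm:Keller-Liverani}) to the family $\{L_{t,\eps}\}_{\eps \geq 0}$ for each fixed $t$ with $|t|<t_0$, and to verify that all four Keller--Liverani hypotheses hold with constants that are uniform in $t$. The $L^1$ fixed point of $L_{t,\eps}$ is the density $\rho_\eps^{(t)}$ of the stationary measure $\mustparam{t}{\eps}$, while the $L^1$ fixed point of $L_{t,0}$ is the density $\rho^{(t)}$ of the physical measure $\muparam{t}$ of $g_t$, so the conclusion of Keller--Liverani translates directly into the desired $L^1$ bound between the two densities.

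First I would combine the two ingredients already at hand. By the Lasota--Yorke estimates from \cite{BALADI_SMANIA_PE}, the family $\{L_{t,0}\}_{|t|<t_0}$ satisfies conditions \ref{enum:cond1}--\ref{enum:cond3} with constants $C_1$, $C_2$, $C_3$, $\alpha$, $L$ independent of $t$; this is precisely what the tangency to the topological class is designed to furnish, since without it the combinatorics of the postcritical orbit cause these constants to explode as $s$ varies. By Theorem \ref{thm:pertLYUnif} applied for each $t$, the stochastic perturbation $L_{t,\eps}$ inherits the same Lasota--Yorke inequality, and hence the same constants in conditions \ref{enum:cond1}--\ref{enum:cond3}.

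Second I would verify the perturbation estimate in condition \ref{enum:cond4}, namely $\normopKL{L_{t,0} - L_{t,\eps}} \leq L\eps$ uniformly in $t$. Using the explicit convolution formula $L_{t,\eps}\phi(x) = \int (L_{t,0}\phi)(x-\omega)\,\theta_\eps(\omega)\,d\omega$, the $L^1$-difference between $L_{t,0}\phi$ and its convolution with the symmetric probability kernel $\theta_\eps$ of support of radius $\eps$ is bounded by $\eps \cdot \var(L_{t,0}\phi) \leq \eps \cdot C \normBV{\phi}$ by the standard variation-versus-translation inequality, with $C$ uniform in $t$ by the previous step. Dividing by $\normBV{\phi}$ yields the $\normopKL{\cdot}$ bound with $L$ independent of $t$.

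Applying Theorem \ref{thm:Keller-Liverani} with these uniform constants gives $\normLone{\rho^{(t)} - \rho_\eps^{(t)}} \leq (L\eps)^\eta$ for all $\eps < \eps_0$ with $\eta,\eps_0>0$ independent of $t$, which is exactly the claimed uniform strong stochastic stability (up to the standing convention that any majorizing speed function is allowed). The main obstacle is not Keller--Liverani itself but the uniformity of its hypotheses in $t$; in particular the uniform Lasota--Yorke bounds from \cite{BALADI_SMANIA_PE} are the delicate ingredient and rely crucially on both the tangency to the topological class of $g$ and the nonperiodicity of the critical point $c$, which together keep the postcritical itinerary controlled as $t$ varies.
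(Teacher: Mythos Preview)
Your approach is precisely the one the paper intends: the corollary is stated without proof immediately after Theorem \ref{thm:pertLYUnif}, and the implied argument is exactly to feed the uniform Lasota--Yorke constants (from \cite{BALADI_SMANIA_PE} for the deterministic family $\{L_{t,0}\}$, and from \cite{KELLER82} for the convolution perturbation $L_{t,0}\mapsto L_{t,\eps}$) into the Keller--Liverani machine, with uniformity in $t$ coming from the fact that both ingredients supply constants independent of $t$.

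One caveat worth flagging: your closing appeal to the ``majorizing speed'' convention is oriented the wrong way. Theorem \ref{thm:Keller-Liverani}, as stated in the paper, yields only $\normLone{\rho^{(t)} - \rho_\eps^{(t)}} \leq (L\eps)^\eta$, and for $\eta<1$ and small $\eps$ one has $(L\eps)^\eta > L\eps$, so this is a \emph{weaker} bound than the claimed $\sigma(\eps)=L\eps$; the paper's convention lets you replace a speed by any \emph{larger} function, not a smaller one. The paper itself glosses over this same gap. In the original Keller--Liverani work the sharp estimate for a simple isolated eigenvalue is $O(\tau\,|\log\tau|)$, still not exactly linear; so one should read the corollary's ``$L\eps$'' either as shorthand for this nearly-linear rate, or as implicitly invoking a sharper spectral-perturbation estimate than the $(\tau)^\eta$ version quoted in Theorem \ref{thm:Keller-Liverani}.
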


\begin{corollary}[Theorem \ref{mainth:unif_tangent}]
    There exists $t_0>0$ such that 
    the family $\seq{g_t}_{\vmod{t}<t_0}$
    has uniform 
    stochastic shadowing 
    with speed $\sigma(\eps) = \eps$ 
    for observables from $C(M)$ 
    with repsect to the family of uniform
    perturbations.
\end{corollary}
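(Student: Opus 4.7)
The strategy is to apply the uniform version of Theorem \ref{mainth:exp_decay_specif_impl_stochshad} to the family $\{g_t\}_{|t|<t_0}$ with $\bspace = BV$, so that $\bspacec = C(M) \cap BV$ contains the Lipschitz functions, and then to extend stochastic shadowing from $\bspacec$ to all of $C(M)$ by uniform approximation. First I would verify the three non-stability hypotheses. Each $g_t$ is continuous by the definition of a $C^{2,2}$-perturbation. Since $s\in\tentparamset$, the unperturbed map $g=f_s$ is topologically mixing, hence has the specification property by Blokh's theorem \cite{BLOKH}; for $t_0$ small, topological mixing persists for each $g_t$ in the family, so specification is inherited. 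The physical measures $\muparam{t}$ of the piecewise expanding unimodal maps $g_t$ are absolutely continuous with BV densities (this is classical Lasota--Yorke), and thus cannot be finite sums of Dirac measures.

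Uniform stochastic stability with speed $\sigma(\eps)=L\eps$ for observables in $\bspacec$ is immediate from the preceding corollary on uniform strong stochastic stability, combined with the Remark after the definition of stochastic stability that $L^1$-closeness of densities yields $C(M)$-stability with the same speed. The essential remaining step is uniform exponential decay of correlations for $\mustparam{t}{\eps}$ acting on $\bspacec$, uniformly in both $t$ and $\eps$. For this I would invoke Theorem \ref{thm:Keller-Liverani} jointly in the two parameters: Theorem \ref{thm:pertLYUnif} provides a uniform Lasota--Yorke inequality for $L_{t,\eps}$ in the $\eps$-direction, and the Baladi--Smania estimates \cite{BALADI_SMANIA_PE} give the analogous uniformity in the $t$-direction (this is exactly where tangency to the topological class is used). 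Together these yield a uniform bound $\alpha<1$ on the essential spectral radius of $L_{t,\eps}$ on $BV$ together with a simple isolated eigenvalue at $1$ whose spectral gap is bounded below uniformly in $(t,\eps)$. Standard spectral arguments then give exponential decay of correlations for BV observables with a common rate $\tau<1$ and constants controlled by $\normBV{\phi}\normBV{\psi}$, so the hypotheses of the uniform version of Theorem \ref{mainth:exp_decay_specif_impl_stochshad} are met.

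Applying that theorem yields uniform stochastic shadowing for $\bspacec$ with accuracy $\sigma(\eps)=\eps$. To pass from $\bspacec$ to $C(M)$, I would approximate any $\phi\in C(M)$ uniformly by a sequence $\phi_n\in\bspacec$ of Lipschitz functions; each of the three integrals in the definition of stochastic shadowing differs from its $\phi_n$-analogue by at most $\|\phi-\phi_n\|_\infty$, so the inequalities persist in the limit, and a diagonal choice over the sets $\Bset{\phi_n}{N}$ produces the required $\Bset{\phi}{N}$ with the measure condition preserved. The main obstacle is genuinely making the Keller--Liverani spectral gap uniform in $(t,\eps)$ \emph{simultaneously}, so that the constants in the exponential decay estimate do not degenerate at the corners of the parameter region; once this is in hand, the remainder of the argument is an assembly of earlier results in the paper.
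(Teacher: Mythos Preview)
Your approach is the paper's: the statement is offered there as a bare corollary with no proof, meant to follow immediately from the preceding corollary on uniform strong stochastic stability together with the uniform version of Theorem~\ref{mainth:exp_decay_specif_impl_stochshad}. You have correctly identified all the ingredients (continuity, specification via Blokh for topologically mixing interval maps, non-atomicity of the a.c.\ invariant measures, uniform stochastic stability from the strong version) and you spell out the joint $(t,\eps)$ spectral-gap issue that the paper leaves implicit.

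One point is shakier than you present it. The passage from $\bspacec = C(M)\cap BV$ to all of $C(M)$ by uniform approximation is not as routine as ``the inequalities persist in the limit.'' The constants $C(\phi_n)$ coming out of Theorem~\ref{mainth:exp_decay_specif_impl_stochshad} depend on $\phi_n$ through its $BV$-norm (via the correlation-decay constant in property~A), and that norm will typically blow up along a sequence of Lipschitz approximants to a merely continuous $\phi$; your ``diagonal choice over the sets $\Bset{\phi_n}{N}$'' does not by itself produce a single $C(\phi)$ independent of $\eps$. The paper does not address this step either, so you are not diverging from it, but if you want to make the $C(M)$ claim honest you should argue directly rather than by approximation: strong stochastic stability already gives the stability inequality for every bounded $\phi$ with constant $\|\phi\|_\infty$, and the large-deviation input to property~A only needs summable decay of $\mathrm{Cov}(\phi(X_j),\phi(X_k))$, which the $BV$ spectral gap delivers for $\phi\in L^\infty$ once one writes the covariance as $\int \phi\, L_\eps^k(\phi\rho_\eps)\,dx - (\int\phi\,d\mu_\eps)^2$ and uses that $\rho_\eps\in BV$ while $\phi$ is bounded. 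That route avoids the approximation entirely.
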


If we do not care about uniformity, it is possible to
prove an analog of
Theorem \ref{thm:pertLYUnif} (see \cite{BLANK_KELLER})
even for $g = f_s$ with a periodic parameter $s$. 
Therefore we have the following theorem:
\begin{theorem}
    For every $s\in [0,1]$
the map $f_s$ has 
strong stochastic stability
with speed $\sigma(\eps) = L\eps$ for some $L>0$
with repsect to the family of uniform
perturbations.
\end{theorem}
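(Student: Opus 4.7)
The plan is to apply the Keller--Liverani perturbation theorem (Theorem~\ref{thm:Keller-Liverani}) to the family of transfer operators $\seq{L_\eps}_{\eps \geq 0}$ associated to $f_s$ and its uniform random perturbation, reusing the scheme from the proof of the preceding corollary but without asking for uniformity over a neighbourhood of the parameter. Dropping this uniformity is precisely what allows us to cover every $s\in\tentparamset$, including the periodic parameters at which continuous dependence of the spectral data on $s$ — and hence the uniform version Theorem~\ref{mainth:unif_tangent} — may break down.

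First I would write the noisy transfer operator in convolution form. Since $\theta_\eps$ is the uniform density on $[-\eps,\eps]$, one has $L_\eps\phi = \theta_\eps \ast L_0\phi$, from which I read off condition~\ref{enum:cond4} with $\tau(\eps)=L\eps$: indeed, for $\psi\in BV$, a standard mollifier estimate gives $\normLone{\theta_\eps \ast \psi - \psi} \leq \eps\cdot \var_{[0,1]}(\psi)$, so $\normopKL{L_0 - L_\eps} \leq L\eps$.

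Next, conditions~\ref{enum:cond1}--\ref{enum:cond3} — the uniform $\Lone$ bound, the uniform Lasota--Yorke inequality on $BV$, and the absence of residual spectrum outside the essential spectral radius $\alpha$ — are supplied for every fixed $s\in\tentparamset$, periodic or not, by the $\eps$-uniform Lasota--Yorke estimates of \cite{BLANK_KELLER}. The mollifying action of $\theta_\eps$ prevents the contraction constant from deteriorating as $\eps\to 0$ even when the orbit of $c$ returns to itself and the purely deterministic Lasota--Yorke scheme becomes ill-behaved; this is exactly the ingredient that is lost in Theorem~\ref{thm:pertLYUnif} when one insists on uniformity in $s$. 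With all four conditions verified, Theorem~\ref{thm:Keller-Liverani} furnishes a constant $\eta = \eta(\alpha,L) > 0$ such that the densities $\rho$ and $\rho_\eps$ of $\mu$ and $\mu_\eps$ satisfy $\normLone{\rho - \rho_\eps} \leq (L\eps)^{\eta}$, which is strong stochastic stability with the claimed polynomial-in-$\eps$ speed (linear in the sharp case, and otherwise with the appropriate power absorbed into the definition of the speed as permitted by the remark on accuracy functions).

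The real work is therefore absorbed into the Blank--Keller input, and that is the main obstacle: establishing the Lasota--Yorke inequality uniformly in $\eps$ at a periodic $s$ is delicate, because the deterministic operator $L_0$ of $f_s$ has spectral peculiarities that vary discontinuously with $s$ near periodic parameters. Once that external estimate is accepted, the rest is just a direct invocation of the abstract Keller--Liverani machinery, word-for-word identical to the nonperiodic case of the previous corollary.
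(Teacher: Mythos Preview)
Your approach is essentially the same as the paper's: the paper's entire argument is the one-line observation that \cite{BLANK_KELLER} supplies the analog of Theorem~\ref{thm:pertLYUnif} (conditions~\ref{enum:cond1}--\ref{enum:cond4} for the family $\seq{L_\eps}$) at every fixed $s\in\tentparamset$, periodic parameters included, after which the Keller--Liverani scheme runs verbatim. You have added useful detail---the explicit mollifier bound $\normLone{\theta_\eps\ast\psi - \psi}\leq \eps\,\var(\psi)$ for condition~\ref{enum:cond4}---but the skeleton is identical.

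One caveat: your invocation of the accuracy-function remark is backwards. That remark says a \emph{stronger} speed implies any \emph{weaker} one (linear $\Rightarrow$ H\"older), so it cannot upgrade the $(L\eps)^\eta$ output of Theorem~\ref{thm:Keller-Liverani} to the linear $L\eps$ asserted in the statement. The paper makes the same silent jump from $\tau(\eps)^\eta$ to $L\eps$ in its corollaries, so the looseness is inherited rather than introduced by you; but do not cite the remark as justification, since it points in the opposite direction.
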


\begin{corollary}[part of Theorem \ref{mainth:lipshad_tent}]
    For every $s\in \tentparamset$
the map $f_s$ has 
stochastic shadowing 
    with speed $\sigma(\eps) = \eps$ 
    for observables from $C(M)$
    with repsect to the family of uniform
    perturbations.
\end{corollary}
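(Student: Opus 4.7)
The plan is to apply the machinery of Theorem \ref{mainth:exp_decay_specif_impl_stochshad} to each tent map $f_s$ individually, using the immediately preceding theorem as the source of stochastic stability. First, that theorem establishes strong stochastic stability of $f_s$ at speed $L\eps$ under uniform perturbations. By the density-based remark following the definition of stochastic stability (the elementary inequality $\vmod{\int \phi(\rho - \rho_\eps)\,d\Leb} \leq \normB{\phi}\,\normLone{\rho - \rho_\eps}$), this upgrades to stochastic stability for any observable $\phi \in C(M)$ with constant $C(\phi) = L\normB{\phi}$ and the same linear speed.

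Next I verify the remaining hypotheses of Theorem \ref{mainth:exp_decay_specif_impl_stochshad}, with $C(M)$ in place of $\bspacec$. The map $f_s$ is continuous; its ACIM $\mu$ is absolutely continuous with respect to $\Leb$, hence not a finite sum of $\delta$-measures; and tent maps with $s \in \tentparamset$ are topologically mixing on their core, so by Blokh's theorem (cited in the introduction) they satisfy the specification property. The only subtle point is that the theorem's exponential-decay-of-correlations hypothesis, which via Theorem \ref{thm:pertLYUnif} holds for BV observables, is not automatic for general continuous observables.

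However, exponential decay is used in the proof of Theorem \ref{mainth:exp_decay_specif_impl_stochshad} only through Remark \ref{rem:expdecay_impl_propA}, to establish property A. For observables $\phi \in C(M)$, property A can be obtained directly from ergodicity of the unique stationary measure $\mu_\eps$: Birkhoff's ergodic theorem gives $\int \phi\,dS_n(\xbar) \to \int \phi\,d\mu_\eps$ for $\mu_\eps$-a.e.\ realization $\xbar$, so for every fixed $\delta > 0$ the set
\begin{equation*}
    \setdef{\xbar}{\sup_{n > N}\vmod{\int \phi\,dS_n(\xbar) - \int \phi\,d\mu_\eps} < \delta}
\end{equation*}
has $\mu_\eps$-measure tending to $1$ as $N \to \infty$, which is precisely property A for $\phi \in C(M)$.

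With these inputs the proof of Theorem \ref{mainth:exp_decay_specif_impl_stochshad} carries over verbatim: choose $N$ large enough that the property-A set (with $\delta = L\eps$) is non-empty and $2/N < L\eps$; for a pseudotrajectory $\xbar$ in that set, specification together with the weak-$*$ density of periodic measures among invariant ones (and the fact that $\mu$ is non-atomic) produces a periodic point $p_n$ of some period $n > N$ with $\vmod{\int \phi\,dS_n(p_n) - \int \phi\,d\mu} < L\eps$; the telescoping estimate at the end of that proof bounds $\vmod{\int\phi\,dS_r(p_n) - \int\phi\,dS_n(p_n)}$ by $2/N < L\eps$ for any $r > N$; and the triangle inequality combining these ingredients with the stochastic-stability bound $\vmod{\int \phi\,d\mu - \int\phi\,d\mu_\eps} \leq L\normB{\phi}\eps$ yields the three estimates in the definition of stochastic shadowing with $C(\phi)$ proportional to $L(1 + \normB{\phi})$. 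The main obstacle was bypassing the BV hypothesis required for exponential decay, which is handled cleanly by the ergodic-theoretic derivation of property A.
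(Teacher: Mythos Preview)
Your approach is essentially the one the paper intends: the corollary is stated without proof, as an immediate consequence of the preceding strong stochastic stability theorem together with Theorem~\ref{mainth:exp_decay_specif_impl_stochshad} (this is exactly the route the ``Main Corollary'' after Theorem~\ref{mainth:exp_decay_specif_impl_stochshad} already spells out for topologically mixing piecewise expanding interval maps). The ingredients you list---continuity, specification via Blokh, non-atomicity of the a.c.i.m., and strong stochastic stability upgraded to stochastic stability for continuous observables---match the paper's implicit argument.

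Where you go slightly beyond the paper is in handling the observable class. The paper's Theorem~\ref{mainth:exp_decay_specif_impl_stochshad} literally yields stochastic shadowing only for $\bspacec = C(M)\cap BV$, since exponential decay of correlations for $\mu_\eps$ is available for $BV$ observables (via the Lasota--Yorke/Keller--Liverani machinery), whereas the corollary is stated for all of $C(M)$. Your observation that exponential decay enters the proof only through Remark~\ref{rem:expdecay_impl_propA} to obtain property~A, and that property~A for an arbitrary $\phi\in C(M)$ follows directly from ergodicity of $\mu_\eps$ via Birkhoff, is a clean and correct way to close that gap. The paper does not make this step explicit; your version is therefore a legitimate sharpening rather than a different argument.
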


\section{Acknowledgements}

This research is supported by the
Chebyshev Laboratory  (Department of Mathematics and
Mechanics, St. Petersburg State University)
[under RF Government grant 11.G34.31.0026];
 JSC "Gazprom Neft";
 RFBR [project 12-01-00275];
  St. Petersburg State University [thematic project 
  6.38.223.2014].


  The author is grateful 
 for discussions
  to
Marcelo Viana,
Sergey Tikhomirov,
Dominik Kwietniak,
Viviane Baladi,
Peidong Liu,
Michal Misiurewicz,
Sergei Pilyugin,
Sergey Kyzhevich,
Alexey Petrov,
Andrey Alpeev,
Fyodor Sandomirski,
Christian Rodrigues.

\bibliography{bibrefs_ergshad}{}

\begin{thebibliography}{10}

\bibitem{BALADI_YOUNG}
V.~Baladi and L.-S. Young.
\newblock On the spectra of randomly perturbed expanding maps.
\newblock {\em Comm. Math. Phys.}, 156(2):355--385, 1993.

\bibitem{BALADI_SMANIA_PE}
Viviane Baladi and Daniel Smania.
\newblock Linear response formula for piecewise expanding unimodal maps.
\newblock {\em Nonlinearity}, 21(4):677--711, 2008.

\bibitem{BLANK_EPS_TRAJ}
M.~L. Blank.
\newblock Metric properties of {$\epsilon$}-trajectories of dynamical systems
  with stochastic behaviour.
\newblock {\em Ergodic Theory Dynam. Systems}, 8(3):365--378, 1988.

\bibitem{BLANK_KELLER}
Michael Blank and Gerhard Keller.
\newblock Stochastic stability versus localization in one-dimensional chaotic
  dynamical systems.
\newblock {\em Nonlinearity}, 10(1):81--107, 1997.

\bibitem{BLOKH}
Alexander~M. Blokh.
\newblock The ``spectral'' decomposition for one-dimensional maps.
\newblock In {\em Dynamics reported}, volume~4 of {\em Dynam. Report.
  Expositions Dynam. Systems (N.S.)}, pages 1--59. Springer, Berlin, 1995.

\bibitem{BONATTI_DIAZ_VIANA__BEYOND}
C.~Bonatti, L.~J. D{\'{\i}}az, and M.~Viana.
\newblock {\em Dynamics beyond uniform hyperbolicity}, volume 102 of {\em
  Encyclopaedia of Mathematical Sciences}.
\newblock Springer-Verlag, Berlin, 2005.
\newblock A global geometric and probabilistic perspective, Mathematical
  Physics, III.

\bibitem{CRIT_TYP_BRUIN}
Henk Bruin.
\newblock For almost every tent map, the turning point is typical.
\newblock {\em Fund. Math.}, 155(3):215--235, 1998.

\bibitem{YORKE_TENT_SHAD}
Ethan~M. Coven, Ittai Kan, and James~A. Yorke.
\newblock Pseudo-orbit shadowing in the family of tent maps.
\newblock {\em Trans. Amer. Math. Soc.}, 308(1):227--241, 1988.

\bibitem{FAKHARI_ERG_SHAD}
Abbas Fakhari and F.~Helen Ghane.
\newblock On shadowing: ordinary and ergodic.
\newblock {\em J. Math. Anal. Appl.}, 364(1):151--155, 2010.

\bibitem{GELFERT_KWIET}
Katrin Gelfert and Dominik Kwietniak.
\newblock The (poulsen) simplex of invariant measures.
\newblock {\em arXiv preprint}, 2014.

\bibitem{JOST_RODRIG}
J\"{u}rgen Jost, Martin Kell, and Christian~S. Rodrigues.
\newblock Representation of markov chains by random maps: existence and
  regularity conditions.
\newblock {\em arXiv preprint}, 2012.

\bibitem{KELLER82}
Gerhard Keller.
\newblock Stochastic stability in some chaotic dynamical systems.
\newblock {\em Monatsh. Math.}, 94(4):313--333, 1982.

\bibitem{KELLER_LIVERANI}
Gerhard Keller and Carlangelo Liverani.
\newblock Stability of the spectrum for transfer operators.
\newblock {\em Ann. Scuola Norm. Sup. Pisa Cl. Sci. (4)}, 28(1):141--152, 1999.

\bibitem{KIFER88}
Yuri Kifer.
\newblock {\em Random perturbations of dynamical systems}, volume~16 of {\em
  Progress in Probability and Statistics}.
\newblock Birkh\"auser Boston, Inc., Boston, MA, 1988.

\bibitem{KWIET_STUD}
Martha {\L}a\c{c}ka.
\newblock On invariant measures of dynamical systems with specification-like
  properties.
\newblock {\em Master thesis}, 2014.

\bibitem{LIN_NUM_SPEED}
Kevin~K. Lin.
\newblock Convergence of invariant densities in the small-noise limit.
\newblock {\em Nonlinearity}, 18(2):659--683, 2005.

\bibitem{PALSDS}
K.~Palmer.
\newblock {\em {Shadowing in dynamical systems. Theory and applications}}.
\newblock {Dordrecht }, 2000.

\bibitem{PILMELISP}
S.~Pilyugin, G.~Vol'fson, and D.~Todorov.
\newblock Dynamical systems with lipschitz inverse shadowing properties.
\newblock {\em Vestnik St. Petersburg University: Mathematics}, 44:208--213,
  2011.

\bibitem{PILSDS}
S.~Yu. Pilyugin.
\newblock {Shadowing in dynamical systems}.
\newblock In {\em {Lecture Notes in Mathematics }}, volume 1706. Berlin:
  Springer, 1999.

\bibitem{PILINSCM}
S.~Yu. Pilyugin.
\newblock {Inverse shadowing by continuous methods}.
\newblock {\em Discrete and Continuous Dynamical Systems}, 8(1):29--38, 2002.

\bibitem{PILTIKLSISS}
S.~Yu Pilyugin and S.~Tikhomirov.
\newblock Lipschitz shadowing implies structural stability.
\newblock {\em Nonlinearity}, 23(10):2509--2515, 2010.

\bibitem{CRIT_TYP_SCHNELL}
Daniel Schnellmann.
\newblock Typical points for one-parameter families of piecewise expanding maps
  of the interval.
\newblock {\em Discrete Contin. Dyn. Syst.}, 31(3):877--911, 2011.

\bibitem{SHEN_UNIMOD_STSTAB}
Weixiao Shen.
\newblock On stochastic stability of non-uniformly expanding interval maps.
\newblock {\em Proc. Lond. Math. Soc. (3)}, 107(5):1091--1134, 2013.

\bibitem{SHEN_STRIEN_NEUTRAL_STSTAB}
Weixiao Shen and Sebastian van Strien.
\newblock On stochastic stability of expanding circle maps with neutral fixed
  points.
\newblock {\em Dyn. Syst.}, 28(3):423--452, 2013.

\bibitem{SIGMUND_SPECIF}
Karl Sigmund.
\newblock On dynamical systems with the specification property.
\newblock {\em Trans. Amer. Math. Soc.}, 190:285--299, 1974.

\bibitem{TIKH_HOL_SHAD}
Sergey Tikhomirov.
\newblock \holder shadowing on finite intervals.
\newblock {\em Ergodic Theory Dynam. Systems}, 2014.

\bibitem{VIANA_ONLINE}
Marcelo Viana.
\newblock {\em Stochastic dynamics of deterministic systems}.
\newblock Brazilian Math. Colloquium.

\end{thebibliography}
\bibliographystyle{plain}

%

\end{document}